\begin{document}

\title{SDEs driven by a time-changed L\'evy process and their associated time-fractional
        order pseudo-differential equations}

\author{Marjorie Hahn \and Kei Kobayashi \and\\ Sabir Umarov}

\institute{M. Hahn \at
              Department of Mathematics, Tufts University, 503 Boston Avenue, Medford, MA 02155\\
              Tel.: 1-617-627-2363 ~~~~ Fax:  1-617-627-3966 \\
              \email{marjorie.hahn@tufts.edu}           
           \and
           K. Kobayashi  \at
              Department of Mathematics, Tufts University, Medford, MA 02155\\
              \email{kei.kobayashi@tufts.edu}
           \and
            S.Umarov  \at
              Department of Mathematics, Tufts University, Medford, MA 02155\\
              \email{sabir.umarov@tufts.edu}
}

\date{Submitted: January 17, 2010; Revised: April 6, 2010 
}

\maketitle

\begin{abstract}
It is known that the transition probabilities of a solution to a
classical It\^o stochastic differential equation (SDE) satisfy
in the weak sense the associated Kolmogorov equation.
The
Kolmogorov equation is a 
partial differential equation
with coefficients determined by the corresponding SDE.
Time-fractional Kolmogorov type equations are used to model complex
processes in many fields. However, the class of SDEs that is
associated with these equations is unknown except in a few special
cases. The present paper shows that in the cases of either
time-fractional order or more general time-distributed order
differential equations, the associated class of SDEs can be
described within the framework of SDEs driven by semimartingales.
These semimartingales are time-changed L\'evy processes where the
independent time-change is given respectively by the inverse of a
single or mixture of independent stable subordinators. Examples are
provided, including a fractional analogue of the Feynman-Kac
formula.
\keywords{time-change \and stochastic differential equation
\and semimartingale \and Kolmogorov equation \and
     fractional order differential equation
     \and
     pseudo-differential operator \and L\'evy process \and stable subordinator}
\subclass{60H10 \and 35S10 \and 60G51} 
\end{abstract}

\section{Introduction}
\label{intro}

In the last few decades, fractional Kolmogorov or equivalently
fractional\break 
Fokker-Planck
equations have appeared as an essential
tool for the study of dynamics of various complex stochastic
processes arising in anomalous diffusion in physics
\cite{MetzlerKlafter00,Zaslavski}, finance \cite{GMSR}, hydrology
\cite{BWM}, and cell biology \cite{Saxton}. Complexity includes
phenomena such as the presence of weak or strong correlations,
different sub- or super-diffusive modes, and jump effects.
For example,
experimental studies of the motion of
macromolecules in a cell membrane
show apparent subdiffusive
motion with several simultaneous diffusive modes
(see \cite{Saxton}).

The present paper identifies a wide class of stochastic differential
equations (SDEs) whose associated partial differential equations are
represented by time-fractional order pseudo-differential equations.
This connection provides stochastic processes whose dynamics
correspond to time-fractional order pseudo-differential equations.

Let $B_t$ be an $m$-dimensional Brownian motion defined on
a probability space $(\Omega,\mathcal{F},\mathbb{P})$ with a
complete right-continuous filtration $(\mathcal{F}_t)$.
A deep connection
between a stochastic process and
its associated partial differential equation is
expressed through
the Kolmogorov forward and backward equations \cite{Applebaum}. This concept is based
on the relationship between two main components:
(i) the Cauchy problem
\begin{align}
\label{first}
\frac{\partial u(t,x)}{\partial t}=\mathcal{A}
u(t,x), \ u(0,x)=\varphi(x), \ t>0, \, x
\in \mathbb{R}^n,
\end{align}
where $\mathcal{A}$ is the differential operator
\begin{equation} \label{operatorA}
\mathcal{A}= \sum_{j=1}^n b_j(x) \frac{\partial }{\partial x_j} +
\frac 12 \sum_{i,j=1}^n \sigma_{i,j}(x)\frac{\partial^2 }{\partial
x_i
\partial x_j},
\end{equation}
with coefficients $b_j(x)$ and $\sigma_{i,j}(x)$ satisfying some regularity conditions; and (ii)
 the associated class of It\^o SDEs given by
\begin{equation}
\label{sdegen} dX_t=b(X_t) dt+\sigma(X_t)dB_t, \, X_{0}=x.
\end{equation}
The coefficients of SDE \eqref{sdegen}
are connected with the coefficients
of the operator $\mathcal{A}$ as follows:
$b(x)=(b_1(x),\ldots,b_n(x))$ and $\sigma_{i,j}(x)$ is the
$(i,j)$-th entry of the product of the $n\times m$ matrix $\sigma
(x) $ with its transpose $\sigma^T(x)$.

One mechanism for establishing this relationship is via semigroup
theory, in which the operator $\mathcal{A}$ is recognized as the
infinitesimal generator of the semigroup
$T_t(\cdot)(x):=\mathbb{E}[{(\cdot)(X_t)}|X_0=x]$ (defined, for
instance, on the Banach space $C_0(\mathbb{R}^n)$ with supnorm),
i.e. $\mathcal{A}\varphi(x)=\lim_{t\to 0}{(T_t-I)\varphi(x)}/{t},$
$\varphi \in {\rm Dom}(\mathcal{A}),$ the domain of $\mathcal{A}$. A
unique solution to
(\ref{first})
is represented by $u(t,x)=(T_t\varphi)(x)$.

Enlarging the
class of
SDEs in (\ref{sdegen}) to those driven by a L\'evy
process leads to a generalization of
connection (i)--(ii)
where
the analogous operator on the right-hand side of (\ref{first}) has
additional terms corresponding to jump components of the driving
process (see  \cite{Applebaum,Situ} and references therein). In this
case, the operator $\mathcal{A}$ in (\ref{operatorA}) takes the form
$\mathcal{L}(x,\mathbf{D}_x)$ in \eqref{levyPsdo}.

A fractional generalization of the Cauchy problem
(\ref{first})
with
$\mathcal{A}=\mathcal{L}(x,\mathbf{D}_x)$, in the sense that the
first order time derivative on the left side of equation
(\ref{first}) is replaced by a time-fractional order derivative, has
appeared in the framework of continuous time random walks (CTRWs)
and fractional kinetic theory
\cite{GillisWeiss,GMV,MeerschaertScheffler,MetzlerKlafter00,MontrollScher,Zaslavski}.
Papers \cite{GM1,Meeschaert1,UG,US}
establish that time-fractional versions of the Cauchy problem are
connected with limit processes arising from certain weakly
convergent sequences or triangular arrays of CTRWs. These limit
processes are time-changed L\'evy processes, where the time-change
arises as the first hitting time of level $t$ (equivalently, the
inverse) for a single stable subordinator.

This paper generalizes the
connection (i)--(ii) to 
time-fractional pseudo-differential equations
that imply a
fractional analogue of Kolmogorov equations. First, it establishes
the class of SDEs replacing \eqref{sdegen} which is associated with
the following Cauchy problem:
\begin{align}
\label{CP1} \tag{1.1$^{'}$}
\mathbf{D}^{\beta}_*u(t,x)=\mathcal{L}(x,\mathbf{D}_x) u(t,x), \ u(0,x)=\varphi(x), \ t>0, \, x \in \mathbb{R}^n,
 \end{align}
where $\mathbf{D}^{\beta}_*$ is the fractional derivative in the sense
of Caputo with $\beta\in (0,1)$ (see Section \ref{sec
PRELIMINARIES}), and $\mathcal{L}(x,\mathbf{D}_x)$ is the
pseudo-differential operator in \eqref{levyPsdo}.
The driving processes of the associated class of SDEs are L\'evy
processes composed with the inverse of a $\beta$-stable
subordinator, $\beta\in (0,1)$ (Theorem \ref{multi} for $N=\nolinebreak 1$).
Since such processes are semimartingales, SDEs with respect to them
are meaningful and have the form
in \eqref{sdelevyfr}.
A partial result when the driving process is either Brownian or L\'evy
stable motion with drift time-changed by the inverse of a single
stable subordinator is considered in
\cite{MagdziarzWeron,MagdziarzWeronKlafter} without specifying the
explicit form of the corresponding SDEs.

More generally, the class of SDEs in the above discussion when the time-change process
is the inverse of an arbitrary mixture of independent stable
subordinators gives rise to a Cauchy problem with a fractional
derivative with distributed orders (see \eqref{dode}) on the left of
\eqref{CP1}, namely,
$$\mathcal{D}_\mu u(t,x)=\mathcal{L}(x,\mathbf{D}_x) u(t,x).$$
In this case, the time-change process is no longer the inverse of a
stable subordinator if at least two different indices arise in the
mixture. Moreover, SDEs corresponding to time-fractional Kolmogorov
equations cannot be described within the classical Brownian- or
L\'evy-driven SDEs.

Section \ref{sec PRELIMINARIES} of this paper recalls the required auxiliary facts.
Section \ref{sec RESULTS} formulates and proves the main results of
the paper, and
provides examples, including a fractional analogue of the
Feynman-Kac formula. Section \ref{sec ALTANATIVE} illustrates an alternative technique for establishing the main results in the case of Brownian motion.

\section{Preliminaries and auxiliaries} \label{sec PRELIMINARIES}

The \emph{fractional integral of order} $\beta>0$ is
\begin{equation}\label{frac-int}
J^{\beta}g(t)=\frac{1}{\Gamma(\beta)}\int_{0}^{t} (t-u)^{\beta
-1}g(u)du,\ t>0,
\end{equation}
where $\Gamma(\cdot)$ is Euler's gamma function. By convention, $J^0
=I,$ the identity operator, and  $Jg(t):= J^1 g(t)$, the integration
operator.
The \emph{fractional derivative of order $\beta \in (0,1)$ in the
sense of Caputo} is $\mathbf{D}_{\ast}^{\beta}g(t)=J^{1-\beta} \frac {d}{dt} g(t)$, $t>0$.  By convention, set
$\mathbf{D}_{\ast}^{\beta}=\frac{d}{dt}$ for
$\beta=1.$
The Laplace transform of $\mathbf{D}_{\ast}^{\beta}g$ is
(\cite{GM97})
\begin{align}
\label{laplace} \widetilde{[\mathbf{D}_{\ast}^{\beta}g]}(s) =
s^{\beta} \tilde{g}(s) - s^{\beta -1}g(0+),
\end{align}
where $\tilde{g}(s) \equiv
{\mathcal{L}}[g](s)=\int_0^{\infty}g(t)e^{-st}dt,$ the Laplace
transform of $g.$

Let $\mu$ be a finite measure on $[0,1].$ The
\emph{fractional derivative with distributed orders} is the operator
(see, e.g.,\ \cite{UG05})
\begin{equation}
\label{dode} \mathcal{D}_{\mu} g(t)=\int_0^1
\mathbf{D}_{\ast}^{\beta} g(t)\, d\mu (\beta).
\end{equation}
These operators provide a generalization of fractional order
derivatives. The mapping $\beta \mapsto \mathbf{D}_{\ast}^{\beta}
g(t)$ is continuous on $[0,1)$ for a differentiable function $g$.
For example, 
if $\mu = C_1
\delta_{\beta_1} + C_2 \delta_{\beta_2}$,
then $\mathcal{D}_{\mu}
g(t)= C_1 \mathbf{D}_{\ast}^{\beta_1}g(t)+ C_2
\mathbf{D}_{\ast}^{\beta_2}g(t).$

If a function $\psi(x,\xi) :  \mathbb{R}^n \times \mathbb{R}^n  \to
\mathbb{C} $ is continuous and satisfies a suitable growth condition
as $|\xi| \to \infty$, then for 
$u \in C^{\infty}_0(\mathbb{R}^n)$, the operator
\begin{equation}
\mathcal{A} u(x)=\frac{1}{(2
\pi)^n}\int_{\mathbb{R}^n}\psi(x,\xi)\hat{u}(\xi)e^{i(x,\xi)}d\xi, \
x\in\mathbb{R}^n, \label{PSO-def}
\end{equation}
where $\hat{u}(\xi)=\int_{\mathbb{R}^n}u(x)e^{-i(x,\xi)} dx$, is meaningful and 
called a {\it pseudo-differential operator} with
{\it symbol} $\psi(x,\xi)$.
For properties of pseudo-differential operators, see the monographs \cite{Hermander,Jacob,Taylor}.

Pseudo-differential operators of interest in this paper are
infinitesimal generators of strongly continuous semigroups
constructed from stochastic processes which are solutions to SDEs
driven by a L\'evy process. Such processes are Feller processes, and
therefore, they have strongly continuous semigroups (see
\cite{Applebaum}). To this end, we will consider symbols
$\psi(x,\xi)$ which are continuous in $x \in \mathbb{R}^n$ and $\xi
\in \mathbb{R}^n$ and, for each fixed $x$, both hermitian and conditionally
positive definite in $\xi$
(see \cite{Applebaum} for details).

A {\it L\'evy process} $L_t \in \mathbb{R}^n, ~ t\geq 0$, with $
L_0=0,$ is an
adapted c\`adl\`ag process 
with independent stationary increments
such that for all $\epsilon, \,  t >0$, 
$\lim_{s\to t}
\mathbb{P}(|L_t-L_s|>\epsilon)=0.$ 
L\'evy processes are
characterized by three parameters: a vector $b \in \mathbb{R}^n,$ a
nonnegative definite matrix $\Sigma,$ and a measure $\nu$ defined on
$\mathbb{R}^n \setminus \{0\}$ such that $\int \min (1,|x|^2) d \nu
<\infty$, called its L\'evy measure. The L\'evy-Khintchine formula
characterizes a L\'evy process (as an infinitely divisible process)
in terms of its characteristic function $\Phi_t(\xi)=e^{t \Psi
(\xi)}$, with
\begin{equation}
\label{levy-khintchin} \Psi(\xi) = i(b,\xi) - \frac{1}{2} (\Sigma
\xi, \xi) + \int_{\mathbb{R}^n \setminus \{0\}}
\hspace{-1mm}(e^{i(w, \xi)}- 1-i (w,\xi) \chi_{(|w| \le
1)}(w))\nu(dw).
\end{equation}
The function $\Psi$ is called the \textit{L\'evy symbol} of $L_t$
(see, e.g.\ \cite{Applebaum,Sato}.)

Particularly important for this paper is the
class of stable subordinators.
For $\beta \in (0,1)$, a
\textit{$\beta$-stable subordinator} is
a one-dimensional strictly increasing L\'evy process $D_t$ starting at $0$
which is self-similar, i.e.\ $\{D_{at}, t\ge 0\}$ has the same finite-dimensional
distributions as $\{a^{1 /
\beta}D_t, t\ge 0\}$,
and
the Laplace transform for $D_1$  is given by
\begin{equation}
\label{subordinator} \mathbb{E}[e^{-sD_1}]= e^{-s^{\beta}}, \ s \ge
0.
\end{equation}
It follows from the general theory of Laplace transforms (see, e.g.
\cite{Widder}) that the density $f_{D_{_1}}(\tau)$ of $D_1$ is
infinitely differentiable on $(0, \infty),$ with the following
asymptotics at zero and infinity
\cite{MinardiLuchkoPagnini,UzhaykinZolotarev}:
\begin{align}
&f_{{D_{_1}}}(\tau) \sim \frac{({\frac \beta
\tau})^{\frac{2-\beta}{2(1-\beta)}}}{\sqrt{2\pi \beta (1-\beta)}} \,
e^{-(1-\beta)({\frac \tau  \beta})^{-\frac{\beta}{1-\beta}}}, \,
\tau
\to 0; \label{atzero}\\
&f_{D_{_1}}(\tau) \sim \frac{\beta}{\Gamma(1-\beta) \tau^{1+\beta}},
\, \tau \to \infty. \label{atinfinity}
\end{align}

Consider an SDE driven by a L\'evy process
\begin{align} \label{sdeLevy}
     Y_t&=x + \int_0^t b(Y_{s-}) ds + \int_0^t \sigma(Y_{s-}) dB_s\\
          & \ \ \ + \int_0^t \int_{|w|<1} \hspace{-1mm}H(Y_{s-},w)\tilde{N}(ds, dw)+ \int_0^t \int_{|w|\geq 1} \hspace{-1mm}K(Y_{s-},w) N(ds, dw),\notag
\end{align}
where $x \in \mathbb{R}^n$, and
the continuous mappings $b:\mathbb{R}^n \to
\mathbb{R}^n,$ $\sigma: \mathbb{R}^n\to \mathbb{R}^{n \times m},$
$H:\mathbb{R}^n \times \mathbb{R}^n \to \mathbb{R}^n$, and
$K:\mathbb{R}^n\times \mathbb{R}^n \to \mathbb{R}^n$ satisfy the
following
Lipschitz and growth
conditions: there exist positive constants $C_1$ and
$C_2$ satisfying
\begin{align}  \label{lip}
\bullet \ \ &|b(y_1)-b(y_2)|^2+\|\sigma(y_1)-\sigma(y_2)\|^2 +\int_{|w|<1}|H(y_1,w)-H(y_2,w)|^2\nu(dw) \notag \\ 
&\le C_1|y_1-y_2|^2, \ \forall \, y_1, y_2 \in \mathbb{R}^n; \\
\label{growth}
\bullet \ \ &\int_{|w|<1}|H(y,w)|^2\nu(dw)  \le C_2(1+|y|^2),  \  \forall \, y
\in \mathbb{R}^n.
\end{align}
Under these conditions, SDE \eqref{sdeLevy} has a unique strong
solution $Y_t$ (see, \cite{Applebaum,Situ}).
If the coefficients
$b$, $\sigma$, $H$, and $K$ are bounded, then $(T_t\varphi)(x)=
\mathbb{E}[\varphi(Y_{t})|Y_0= x]$ is a strongly continuous
contraction semigroup defined on the Banach space
$C_0(\mathbb{R}^n).$  Moreover, the pseudo-differential equation
associated with the process $Y_t$ takes the form
\begin{equation}
\label{levyFP} \displaystyle{ \frac{\partial u(t,x)}{\partial t}  =
\mathcal{L}(x,\mathbf{D}_x) u(t,x)},
\end{equation}
where the infinitesimal generator $\mathcal{L}(x,\mathbf{D}_x)$
is a pseudo-differential operator
 with the symbol
\begin{align}
\label{levysymbol}\Psi(x,\xi)
     &= i(b(x),\xi) - \frac{1}{2} (\Sigma (x) \xi, \xi) \\
     & \ \ \ + \int_{\mathbb{R}^n \setminus \{0\}} \hspace{-2mm} (e^{i(G(x,w),\xi)}- 1-i(G(x,w), \xi) \chi_{(|w| < 1)}(w))\nu(dw),\notag
\end{align}
where $G(x,w)=H(x,w)$ if $|w|<1,$ and $G(x,w)=K(x,w)$ if $|w|\ge 1$\linebreak
(\cite{Applebaum,Situ}). For each fixed $x\in \mathbb{R}^n$, the
symbol $\Psi(x,\xi)$ is continuous, hermitian, and conditionally
positive definite \cite{Courrege,Jacob}. Using
$\mathbf{D}_x=\frac{1}{i}(\partial/\partial x_1,\ldots,\partial/
\partial x_n)$, the pseudo-differential operator $\mathcal{L}(x,\mathbf{D}_x)$ has
the form
\begin{align} \label{levyPsdo}
    &\mathcal{L}(x,\mathbf{D}_x)\varphi(x)
    = i(b(x),\mathbf{D}_x)\varphi(x) - {\frac 1 2}(\Sigma(x) \mathbf{D}_x, \mathbf{D}_x)\varphi(x) \\
&+ \int_{\mathbb{R}^n \setminus \{ 0\}} \hspace{-2mm}
\bigl[\varphi(x+G(x,w)) -\varphi(x)- i\chi_{(|w|<1)}(w) (G(x,w),
\mathbf{D}_x)\varphi(x) \bigr] \nu(dw).\notag
\end{align}
Here, $\mathcal{L}(x,\mathbf{D}_x):C_0^2(\mathbb{R}^n) 
\longrightarrow C_0(\mathbb{R}^n),$ i.e. $C_0^2(\mathbb{R}^n)
\subset {\rm Dom}\bigl(\mathcal{L}(x,\mathbf{D}_x)\bigr).$

\section{Main results and examples} \label{sec RESULTS}

Let $D_t$ be an $(\mathcal{F}_t)$-adapted strictly increasing
c\`adl\`ag process starting at $0$ such that
$\lim_{t\to\infty}D_t=\infty$ a.s.  The
\textit{inverse} or the \textit{first hitting time process} $E_t$ of
$D_t$ is defined by $E_t:=\inf \{ \tau \ge 0: D_{\tau}>t \}$.
The inverse $E_t$ is a \textit{continuous $(\mathcal{F}_t)$-time-change},
i.e.\ it is a continuous, nondecreasing family of $(\mathcal{F}_t)$-stopping times.
In fact, for any $t, \tau>0$, we have
$\{E_t<\tau\}=\{D_{\tau-}>t\}\in\mathcal{F}_\tau$. Hence, by the
right-continuity of the filtration $(\mathcal{F}_t)$, each random
variable $E_t$ is an $(\mathcal{F}_t)$-stopping time.

Let $D_{1, t}$ and $D_{2, t}$ be independent $(\mathcal{F}_t)$-adapted
strictly increasing c\`adl\`ag processes.   Then $D_t=D_{1, t}+D_{2, t}$ also possesses
the same property, and its inverse process $E_t$ satisfies
$\mathbb{P}(E_t \le  \tau)=\mathbb{P}(D_{\tau}> t)=1-(F_{\tau}^{(1)} \ast
F_{\tau}^{(2)})(t).$
Here,
for $k=1,2$, $F_{\tau}^{(k)}(t)=$
$\mathbb{P}(D_{k,\tau} \le t)$ with density $f_{\tau}^{(k)}$ (if it exists), and
$\ast$ denotes convolution of cumulative distribution functions or
densities, whichever is required.  For notational convenience, if
$a, b > 0$, let
\begin{align*}
&\left[F_1^{(1)}\left( \frac \cdot a \right)\ast
F^{(2)}_1\left(\frac \cdot b\right)\right](t) :=\int_{s=0}^{s=t}
F_1^{(1)}\left(\frac{t-s}{a}\right)
dF_1^{(2)}\left(\frac{s}{b}\right),\\
\intertext{which, if the density functions exist, can also be written
as}
&\left[F_1^{(1)}\left( \frac \cdot a \right)\ast
F^{(2)}_1\left(\frac \cdot b\right)\right](t) =\frac{1}{b}
\int_{s=0}^{s=t} \bigl(Jf_1^{(1)}\bigr)\left(\frac{t-s}{a}\right)
f_1^{(2)}\left(\frac{s}{b}\right) ds,
\end{align*}
where $J$ is the usual integration operator.

\begin{lemma}
\label{11} Let $D_t=c_1 D_{1, t}+ c_2 D_{2, t}$, where $c_1, c_2$
are positive constants and $D_{1, t}$ and $D_{2, t}$ are independent
stable subordinators with respective indices $\beta_1$ and $\beta_2$
in $(0,1)$. Then the inverse $E_t$ of $D_t$ satisfies
 \begin{equation}
\mathbb{P}(E_t \le \tau)= 1- \left[F_1^{(1)}\left(\frac{\cdot}{c_1
\tau^{1/
 \beta_1}}\right) \ast F_1^{(2)}\left(\frac{\cdot}{c_2 \tau^{1/
\beta_2}}\right)\right](t) \label{11conv}
\end{equation}
and has the density
\begin{align}
\label{density10} f_{E_t}(\tau) =-  \frac{\partial}{\partial
\tau}\Bigg\{\frac{1}{c_2 \tau^{1/ \beta_2}}
\left[\bigl(Jf_1^{(1)}\bigr)\left(\frac{\cdot}{c_1 \tau^{1/
\beta_1}}\right)
   \ast f_1^{(2)}\left(\frac{\cdot}{c_2 \tau^{1/ \beta_2}}\right)\right](t)\Bigg\}.
\end{align}
\end{lemma}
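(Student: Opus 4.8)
The plan is to reduce the statement to the duality identity $\mathbb{P}(E_t \le \tau) = \mathbb{P}(D_\tau > t)$ that was established in the paragraph preceding the lemma, and then to evaluate $\mathbb{P}(D_\tau > t)$ explicitly by combining the self-similarity of the two stable subordinators with their independence. First I would fix $\tau > 0$ and write $D_\tau = c_1 D_{1,\tau} + c_2 D_{2,\tau}$. Since $D_{1,t}$ and $D_{2,t}$ are independent, the summands $c_1 D_{1,\tau}$ and $c_2 D_{2,\tau}$ are independent, so the cumulative distribution function of $D_\tau$ is the convolution of their individual distribution functions. To identify each of these, I would invoke self-similarity: since $D_{k,t}$ is a $\beta_k$-stable subordinator, $\{D_{k,a\tau}\}$ has the same finite-dimensional distributions as $\{a^{1/\beta_k} D_{k,\tau}\}$, hence $D_{k,\tau} \overset{d}{=} \tau^{1/\beta_k} D_{k,1}$ for each fixed $\tau$. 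Consequently $\mathbb{P}(c_k D_{k,\tau} \le t) = \mathbb{P}(D_{k,1} \le t/(c_k \tau^{1/\beta_k})) = F_1^{(k)}(t/(c_k \tau^{1/\beta_k}))$.

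Combining these two observations yields $\mathbb{P}(D_\tau \le t) = [F_1^{(1)}(\cdot/(c_1 \tau^{1/\beta_1})) \ast F_1^{(2)}(\cdot/(c_2 \tau^{1/\beta_2}))](t)$, and the duality identity then gives $\mathbb{P}(E_t \le \tau) = \mathbb{P}(D_\tau > t) = 1 - \mathbb{P}(D_\tau \le t)$, which is precisely \eqref{11conv}. For the density \eqref{density10}, I would differentiate $\mathbb{P}(E_t \le \tau)$ in $\tau$, so that $f_{E_t}(\tau) = -\frac{\partial}{\partial \tau}[F_1^{(1)}(\cdot/(c_1 \tau^{1/\beta_1})) \ast F_1^{(2)}(\cdot/(c_2 \tau^{1/\beta_2}))](t)$, and then rewrite the convolution of distribution functions in its density form using the identity defined just before the lemma. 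Writing $F_1^{(1)} = J f_1^{(1)}$ and carrying the density $\frac{1}{c_2 \tau^{1/\beta_2}} f_1^{(2)}(\cdot/(c_2 \tau^{1/\beta_2}))$ of the second summand converts the bracket into $\frac{1}{c_2 \tau^{1/\beta_2}}[(J f_1^{(1)})(\cdot/(c_1 \tau^{1/\beta_1})) \ast f_1^{(2)}(\cdot/(c_2 \tau^{1/\beta_2}))](t)$, which is exactly the expression appearing inside the $\tau$-derivative in \eqref{density10}.

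The self-similarity scaling and the passage from independence to convolution are routine; the two points that need care are the justification that $\mathbb{P}(E_t \le \tau) = \mathbb{P}(D_\tau > t)$ holds with no boundary correction, and the legitimacy of the final differentiation. The former rests on the fact that $D_\tau$ has a continuous distribution, being the convolution of the smooth stable densities $f_{D_1}$, so that the events $\{D_{\tau-} > t\}$ and $\{D_\tau > t\}$ coincide up to a null set and the distribution function has no atoms to spoil the identity. The latter follows from the smoothness of $f_{D_1}$ on $(0,\infty)$ together with the integrable behavior recorded in \eqref{atzero}--\eqref{atinfinity}, which control the convolution integrand near both endpoints uniformly in $\tau$ on compact sets and thereby permit differentiating the representation \eqref{11conv} in $\tau$. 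I expect this differentiability justification to be the only genuinely delicate step, since everything else is an exact computation.
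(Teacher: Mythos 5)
Your proposal is correct and follows essentially the same route as the paper: the identity $\mathbb{P}(E_t\le\tau)=\mathbb{P}(D_\tau>t)$, self-similarity to reduce $D_{k,\tau}$ to $\tau^{1/\beta_k}D_{k,1}$, independence to obtain the convolution, and differentiation in $\tau$ for the density. The paper's proof is a two-line version of exactly this argument; your added remarks on the absence of atoms in the law of $D_\tau$ and on the legitimacy of differentiating under the convolution are details the paper leaves implicit.
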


\begin{proof}
Since $D_{1, \tau}$ and $D_{2, \tau}$ are independent and
self-similar processes,
\begin{align*}
\mathbb{P}(E_t \le {\tau} )&=\mathbb{P}(D_{\tau} >t)
=1-\mathbb{P}\bigl(c_1 \tau^{1/ \beta_1}D_{1,1}+c_2 \tau^{1/ \beta_2} D_{2,1} \le t\bigr),
\end{align*}
by which \eqref{11conv} follows.  Differentiating \eqref{11conv} with respect to $\tau$ yields \eqref{density10}.  \qed
\end{proof}

\begin{lemma} \label{estimation}
For any $t<\infty$, the density $f_{E_t}(\tau)$ in
\eqref{density10} is bounded, and there exist a number $\beta \in
(0,1)$ and positive constants $C$, $k$, not depending on $\tau$,
such that
\begin{equation}
\label{est} f_{E_t}(\tau) \le C \exp
\Bigl(-k\tau^{\frac{1}{1-\beta}}\Bigr)
\end{equation}
for $\tau$ large enough.
\end{lemma}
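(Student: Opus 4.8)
The plan rests on a single analytic input: the small-value asymptotics \eqref{atzero} of the stable density $f_{D_1}$, which decays like $\exp\!\big(-(1-\beta)(z/\beta)^{-\beta/(1-\beta)}\big)$ as $z\to0$, and the idea is to convert this into decay in $\tau$ via self-similar scaling. First I would rewrite the distribution function as $\mathbb{P}(E_t\le\tau)=1-F_{D_\tau}(t)$, where by self-similarity $D_\tau\stackrel{d}{=}c_1\tau^{1/\beta_1}D_{1,1}+c_2\tau^{1/\beta_2}D_{2,1}$. Writing $F_{D_\tau}(t)$ as a double integral against the product density $f_1^{(1)}(u)f_1^{(2)}(v)$ and changing variables $p=c_1\tau^{1/\beta_1}u$, $q=c_2\tau^{1/\beta_2}v$ moves all the $\tau$-dependence into the integrand while fixing the domain to the triangle $\{p,q\ge0,\ p+q\le t\}$:
\begin{equation*}
F_{D_\tau}(t)=\frac{1}{c_1c_2\,\tau^{1/\beta_1+1/\beta_2}}\iint_{p+q\le t} f_1^{(1)}\!\Big(\frac{p}{c_1\tau^{1/\beta_1}}\Big)\,f_1^{(2)}\!\Big(\frac{q}{c_2\tau^{1/\beta_2}}\Big)\,dp\,dq.
\end{equation*}
Since the domain no longer depends on $\tau$, one obtains $f_{E_t}(\tau)=-\partial_\tau F_{D_\tau}(t)$ by differentiating under the integral sign, a step justified by dominated convergence because the integrand and its $\tau$-derivative are jointly continuous and bounded on the compact region for $\tau$ near any fixed $\tau_0>0$.

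For the tail estimate \eqref{est}, I would set $\beta=\min(\beta_1,\beta_2)$ and note that on the triangle the arguments satisfy $p/(c_1\tau^{1/\beta_1})\le t/(c_1\tau^{1/\beta_1})$, and likewise for $q$, so for large $\tau$ both arguments lie in a shrinking neighbourhood of $0$ where \eqref{atzero} applies. The key point is that the density factor whose index equals $\beta$ produces, after the scaling $z\sim\tau^{-1/\beta}$, a term $\exp\!\big(-(1-\beta)(z/\beta)^{-\beta/(1-\beta)}\big)=\exp\!\big(-\mathrm{const}\cdot\tau^{1/(1-\beta)}\big)$, which is exactly the decay claimed. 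Differentiating in $\tau$ generates three groups of terms (from the prefactor, from $\partial_\tau f_1^{(1)}$, and from $\partial_\tau f_1^{(2)}$), but each retains at least one copy of the smaller-index factor carrying this superexponential decay; the remaining density factor is bounded by $\|f_{D_1}\|_\infty<\infty$, while the prefactor derivatives together with the chain-rule factors $\partial_\tau(p/(c_k\tau^{1/\beta_k}))$ contribute only powers of $\tau$. Hence every term is dominated by $(\text{polynomial in }\tau)\cdot\exp(-\mathrm{const}\cdot\tau^{1/(1-\beta)})$, and absorbing the polynomial into the exponential for $\tau$ large enough gives \eqref{est}.

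Boundedness over all of $(0,\infty)$ then follows from the endpoint behaviour together with continuity. The density $f_{E_t}$ is continuous on $(0,\infty)$ because $f_1^{(1)},f_1^{(2)}$ are infinitely differentiable on $(0,\infty)$ (as recalled after \eqref{subordinator}); it tends to $0$ as $\tau\to\infty$ by the estimate just proved; and as $\tau\to0^+$ the arguments of the densities tend to $+\infty$, so the asymptotics at infinity \eqref{atinfinity}, namely $f_{D_1}(w)\sim \beta w^{-1-\beta}/\Gamma(1-\beta)$, combine with the prefactor $\tau^{-1/\beta_1-1/\beta_2}$ to yield a finite limit, the powers of $\tau$ cancelling exactly, as one checks directly in the single-subordinator case $f_{E_t}(\tau)=\frac{t}{\beta}\tau^{-1-1/\beta}f_{D_1}(t\tau^{-1/\beta})$. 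A function continuous on $(0,\infty)$ with finite limits at both endpoints is bounded, which gives the first assertion.

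The main obstacle is controlling the differentiated integrand, in particular the behaviour of $f_{D_1}'(z)$ as $z\to0$. I expect this to be resolved by observing that the asymptotics \eqref{atzero} are stable under differentiation: the logarithmic derivative of the dominant exponential grows only like a power of $1/z$, so each differentiation multiplies the leading term by a polynomial in $1/z$ (hence a power of $\tau$ after scaling) without disturbing the exponential factor. Consequently all correction and cross terms are genuinely of lower order, and the bookkeeping above closes.
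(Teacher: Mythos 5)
Your argument is correct and follows precisely the route the paper indicates: the paper gives no proof beyond the remark that the lemma follows from ``routine elementary calculations using \eqref{atzero} and \eqref{atinfinity}'', and your change of variables to a fixed triangle, differentiation under the integral, and scaling of the small-argument asymptotics is exactly that calculation carried out, with $\beta=\min(\beta_1,\beta_2)$ a legitimate (if not optimal) choice since the lemma only asserts existence of some $\beta$. The one ingredient you use beyond the literal statement of \eqref{atzero} --- that the asymptotics survive differentiation, so $(f_1^{(k)})'$ is also superexponentially small near $0$ --- is a known property of stable densities (derivable from the complete expansions in the cited references), and you flag and justify it appropriately.
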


Routine elementary calculations using \eqref{atzero} and \eqref{atinfinity} yield Lemma \ref{estimation}.
This lemma can be extended for processes $E_t$ which are
inverses of stochastic processes of the form $D_t=\sum_{k=1}^N c_k
D_{k, t},$ where $D_{k, t}, k=1,\ldots,N,$ are independent stable
subordinators of respective indices $\beta_k\in(0,1)$, and $c_k$ are
positive constants.

Theorems \ref{part2} and \ref{continuum} require the following {\it
assumption}: $\{T_t, t \ge 0\}$ is a strongly continuous semigroup
defined on a Banach space $\mathcal{X}$ with norm $\|\cdot\|$, such
that the estimate
\begin{equation} \label{semigrestimate}
\|T_t\varphi\| \le M \|\varphi\|e^{\omega t}
\end{equation}
is valid for some constants $M >0$ and $\omega \ge 0.$ For example,
if $\{T_t\}$ is a semigroup associated with a Feller process, then
\eqref{semigrestimate} is satisfied with $\omega=0$. This assumption
implies that  any number $s$ with $Re(s)>\omega$ belongs to the
resolvent set $\rho(\mathcal{A})$ of the infinitesimal generator
$\mathcal{A}$ of $T_t$ and the resolvent operator is represented in
the form $R(s,\mathcal{A})=\int_0^{\infty}e^{-st}T_t\, dt$ (see
\cite{EngelNagel}).

\begin{theorem} \label{part2}
Define the process $D_t=\sum_{k=1}^N c_k D_{k,t},$ where $D_{k,t}$,
$1\le k\le N,$ are independent stable subordinators with respective
indices $\beta_k \in (0,1)$ and constants $c_k>0.$ Let $E_t$ be the
inverse process to $D_t$. Suppose that $T_t$ is a strongly continuous
semigroup in a Banach space $\mathcal{X}$, satisfies
(\ref{semigrestimate}), and has infinitesimal generator
$\mathcal{A}$. 
Then, for each fixed $t\ge 0$, the integral
$\int_0^{\infty}f_{E_t}(\tau)T_{\tau} \varphi \, d\tau$ exists and
the vector-function $v(t)=\int_0^{\infty}f_{E_t}(\tau)T_{\tau}
\varphi \, d\tau,$ where $\varphi \in \mbox{Dom}(\mathcal{A}),$
satisfies the abstract Cauchy problem
\begin{align} \label{abstract22222}
\sum_{k=1}^N C_k \mathbf{D}_{\ast}^{\beta_k}v(t) =\mathcal{A} v(t), \, t>0,
\ v(0)=\varphi,
\end{align}
where $C_k=c_k^{{\beta_k}},$ $k=1,\ldots,N.$
\end{theorem}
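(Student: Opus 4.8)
The plan is to pass to Laplace transforms in the time variable $t$ and reduce the assertion to the resolvent identity for the generator $\mathcal{A}$. First I would note that the integral defining $v(t)$ converges as a Bochner integral in $\mathcal{X}$: by \eqref{semigrestimate} we have $\|T_\tau\varphi\|\le M\|\varphi\|e^{\omega\tau}$, while the $N$-summand version of Lemma \ref{estimation} gives $f_{E_t}(\tau)\le C\exp(-k\tau^{1/(1-\beta)})$ for large $\tau$, so $\int_0^\infty f_{E_t}(\tau)\|T_\tau\varphi\|\,d\tau<\infty$ for each fixed $t$. For the Laplace transform I would bound the full double integral: writing $\phi(s):=\sum_{k=1}^N C_k s^{\beta_k}$ with $C_k=c_k^{\beta_k}$, the estimate $\int_0^\infty\!\!\int_0^\infty e^{-st}f_{E_t}(\tau)\|T_\tau\varphi\|\,dt\,d\tau\le M\|\varphi\|\int_0^\infty \frac{\phi(s)}{s}e^{-\tau(\phi(s)-\omega)}\,d\tau$ is finite for every real $s$ with $\phi(s)>\omega$, which simultaneously establishes the existence of $\tilde v(s)=\int_0^\infty e^{-st}v(t)\,dt$ and justifies the Fubini interchange used below.

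The key computation is the double Laplace transform of the density. Independence and self-similarity of the subordinators give $\mathbb{E}[e^{-sD_\tau}]=\prod_{k=1}^N e^{-\tau(c_k s)^{\beta_k}}=e^{-\tau\phi(s)}$. Integrating $\mathbb{P}(D_\tau>t)$ against $e^{-st}\,dt$ by parts (the boundary terms vanish since $D_\tau>0$ a.s.\ for $\tau>0$) yields $\int_0^\infty e^{-st}\mathbb{P}(D_\tau>t)\,dt=\tfrac1s\bigl(1-e^{-\tau\phi(s)}\bigr)$, and differentiating in $\tau$, using $f_{E_t}(\tau)=\partial_\tau\mathbb{P}(D_\tau>t)$, gives
\[
\int_0^\infty e^{-st}f_{E_t}(\tau)\,dt=\frac{\phi(s)}{s}\,e^{-\tau\phi(s)}.
\]
Applying Fubini and then the resolvent representation $R(\lambda,\mathcal{A})\varphi=\int_0^\infty e^{-\lambda\tau}T_\tau\varphi\,d\tau$, valid once $\phi(s)>\omega$, I obtain
\[
\tilde v(s)=\frac{\phi(s)}{s}\int_0^\infty e^{-\tau\phi(s)}T_\tau\varphi\,d\tau=\frac{\phi(s)}{s}\,R(\phi(s),\mathcal{A})\varphi.
\]

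Next I would Laplace-transform the left-hand side of \eqref{abstract22222}. Since $E_t\to0$ as $t\to0+$, the density concentrates at the origin and $v(0+)=T_0\varphi=\varphi$, so \eqref{laplace} gives $\widetilde{[\mathbf{D}_\ast^{\beta_k}v]}(s)=s^{\beta_k}\tilde v(s)-s^{\beta_k-1}\varphi$, whence
\[
\widetilde{\Bigl[\textstyle\sum_k C_k\mathbf{D}_\ast^{\beta_k}v\Bigr]}(s)=\phi(s)\tilde v(s)-\frac{\phi(s)}{s}\varphi=\frac{\phi(s)}{s}\bigl[\phi(s)R(\phi(s),\mathcal{A})\varphi-\varphi\bigr].
\]
The resolvent identity $\phi(s)R(\phi(s),\mathcal{A})\varphi-\varphi=\mathcal{A}R(\phi(s),\mathcal{A})\varphi$, valid for $\varphi\in\mathrm{Dom}(\mathcal{A})$, turns the bracket into $\mathcal{A}R(\phi(s),\mathcal{A})\varphi$, so the right-hand side equals $\mathcal{A}\bigl[\tfrac{\phi(s)}{s}R(\phi(s),\mathcal{A})\varphi\bigr]=\mathcal{A}\tilde v(s)$. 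Since $\mathcal{A}$ is closed it commutes with the Bochner integral, giving $\mathcal{A}\tilde v(s)=\widetilde{[\mathcal{A}v]}(s)$; uniqueness of the $\mathcal{X}$-valued Laplace transform then yields $\sum_k C_k\mathbf{D}_\ast^{\beta_k}v(t)=\mathcal{A}v(t)$ for $t>0$, together with $v(0)=\varphi$.

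The main obstacles are analytic rather than structural: justifying the interchange of $\partial_\tau$ with the $t$-integral in the double Laplace transform identity, the Fubini step, and—most delicately—passing the closed operator $\mathcal{A}$ through the Bochner integral and invoking uniqueness of $\mathcal{X}$-valued Laplace transforms. One must also confirm that $\varphi\in\mathrm{Dom}(\mathcal{A})$ forces $v(t)\in\mathrm{Dom}(\mathcal{A})$ and that $t\mapsto\mathcal{A}v(t)$ is continuous with at most exponential growth, so that $\widetilde{[\mathcal{A}v]}(s)$ is well defined on the same half-line $\phi(s)>\omega$ where the identity is established.
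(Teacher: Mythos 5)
Your proposal is correct and follows essentially the same route as the paper: compute the Laplace transform in $t$ of $v$, use $\mathbb{E}[e^{-sD_\tau}]=e^{-\tau\sum_k C_k s^{\beta_k}}$ to obtain $\tilde v(s)=\frac{\phi(s)}{s}R(\phi(s),\mathcal{A})\varphi$, apply the resolvent identity, match against the Laplace transform \eqref{laplace} of the Caputo derivatives, and invert. The only cosmetic difference is that you derive the $t$-Laplace transform of $f_{E_t}(\tau)$ directly from $\mathbb{P}(D_\tau>t)$ rather than through the explicit convolution density of Lemma \ref{11}, and you are somewhat more explicit about the Fubini and closed-operator justifications the paper leaves implicit.
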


\begin{proof}
For simplicity, the proof will be given in the case $N=2$. First,
define the vector-function $p(\tau)=T_{\tau} \varphi,$ where $\varphi
\in \mbox{Dom}(\mathcal{A}).$ In accordance with the conditions of
the theorem, $p(\tau)$ satisfies the abstract Cauchy problem
\begin{equation}
\label{abstract20} \displaystyle{ \frac{\partial p(\tau)}{\partial
\tau} = \mathcal{A} p(\tau)}, ~~ p(0) = \varphi,
\end{equation}
where $\mathcal{A}$ is the infinitesimal generator of
$T_{\tau}.$ Now consider the integral $\int_0^{\infty}f_{E_t}(\tau)
T_{\tau} \varphi\, d\tau.$ It follows from Lemma \ref{estimation}
and condition (\ref{semigrestimate}) that
\begin{align}\label{estimate000}
\int_0^{\infty} f_{E_t}(\tau) \|T_{\tau} \varphi\|
\, d\tau \le C \| \varphi \| \int_0^{\infty} e^{-(k
\tau^{\frac{1}{1-\beta}}-\omega \tau)}\, d\tau < \infty,
\end{align}
where $\beta\in(0,1)$ and $C$, $k>0$ are constants. Hence,
the Bochner
integral $\int_0^{\infty} f_{E_t}(\tau) T_{\tau} \varphi\, d\tau$ exists
for each fixed $t \ge 0.$
Denote this
vector-function by
\begin{equation}
\label{def_v(t)} v(t):=\int_0^{\infty} f_{E_t}(\tau) T_{\tau}
\varphi\, d\tau.
\end{equation}
The definition of $T_t$ implies
$v(0)=\lim_{t \to 0+} \int_0^{\infty}f_{E_t}(\tau)T_{\tau} \varphi \,
d\tau = T_{0} \varphi = \varphi,$
in the norm of $\mathcal{X}.$ By \eqref{density10},
\[
v(t)= -\int_0^{\infty} \hspace{-0.5mm} \frac{\partial}{\partial \tau}\Bigg\{
\frac{1}{c_2 \tau^{1/ \beta_2}}
\left[(Jf_1^{(1)})\left(\frac{\cdot}{c_1 \tau^{1/
\beta_1}}\right) \ast f_1^{(2)}\left(\frac{\cdot}{c_2 \tau^{1/
\beta_2}}\right)\right](t)\Bigg\} \hspace{0.5mm}T_{\tau} \varphi \,
d \tau.
\]
Since
\begin{align*}
{\mathcal{L}} \left[\frac{1}{b}\hspace{1mm}\bigl(Jf_1^{(1)}\bigr)
\left( \frac t a \right) \ast f^{(2)}_1\left(\frac t
b\right)\right](s)&= \frac{1}{b} \frac{1}{as} \left(a
\widetilde{f_1^{(1)}} (as) \right) \left(b \widetilde{f_1^{(2)}}
(bs) \right)\\& =\frac{1}{s} \widetilde{f_1^{(1)}} (as)
\widetilde{f_1^{(2)}} (bs),
\end{align*}
using \eqref{subordinator}, the Laplace transform of $v(t)$ takes
the form
\begin{align}
\label{laplace_v(t)} \tilde{v}(s)
&=-\int_0^{\infty}
\frac{\partial}{\partial \tau} \Big\{ \frac{1}{s} e^{-\tau
c_1^{\beta_1} s^{\beta_1}} e^{-\tau c_2^{\beta_2} s^{\beta_2}}
\Big\} \hspace{0.5mm}T_{\tau} \varphi
\, d \tau\\
&=(c_1^{\beta_1} s^{\beta_1-1}+ c_2^{\beta_2} s^{\beta_2-1}) \, \widetilde{[T_{\tau} \varphi]}(c_1^{\beta_1}
s^{\beta_1}+c_2^{\beta_2} s^{\beta_2})\notag \\
             &=(C_1 s^{\beta_1-1}+ C_2 s^{\beta_2-1}) \, \tilde{p} \, (C_1
s^{\beta_1}+C_2 s^{\beta_2}),\notag
\end{align}
where $C_k=c_k^{\beta_k}$, $k=1,2.$
Due to \eqref{semigrestimate}, this is well defined for all $s$ such that $C_1 s^{\beta_1}+C_2 s^{\beta_2} > \omega$.
On the
other hand it follows from (\ref{abstract20}) that
\begin{align} \label{lapforp}
(s-\mathcal{A})\tilde{p}(s)=\varphi, ~~ \forall s >\omega.
\end{align}
Let $\omega_0 \ge 0$ be a number such that $s > \omega_0$ iff $ C_1
s^{\beta_1}+C_2 s^{\beta_2} > \omega.$ Then \eqref{laplace_v(t)} and
\eqref{lapforp} together yield
\[
[C_1s^{\beta_1}+ C_2 s^{\beta_2} - \mathcal{A}]
\tilde{v}(s)=(C_1s^{\beta_1-1}+ C_2 s^{\beta_2-1})\varphi, ~~  s >
\omega_0.
\]
Writing this in the form
\[
C_1[s^{\beta_1}\tilde{v}(s)- s^{\beta_1-1}v(0)]
 + C_2[s^{\beta_2}\tilde{v}(s)-s^{\beta_2-1}v(0)]
 = \mathcal{A} \tilde{v}(s), ~~  s > \omega_0,
\]
recalling \eqref{laplace}, and applying the inverse Laplace
transform to both sides gives
\[
C_1 \mathbf{D}_{\ast}^{\beta_1}v(t)+C_2
\mathbf{D}_{\ast}^{\beta_2}v(t) =\mathcal{A} v(t).
\]
Hence $v(t)$ satisfies the Cauchy problem
(\ref{abstract22222}).
The case $N > 2$ can
be proved using the same method.  \qed
\end{proof}

The next theorem provides an extension with $D_t$ as the weighted
average of an arbitrary number of independent stable subordinators.
It is easy to verify that the process $D_t=\sum_{k=1}^Nc_k D_{k,t}$
given in Theorem {\ref{part2} satisfies
\begin{align} \label{llsymbol1}
\ln \mathbb{E} \bigl[e^{-s D_t}\bigr]\Big|_{t=1}=-\sum_{k=1}^N
c_k^{\beta_k}s^{\beta_k}, ~ s \ge 0.
\end{align}
The function on the right-hand side of \eqref{llsymbol1} can be
expressed as the integral $-\int_0^1 s^{\beta} d\mu(\beta),$ with
$\mu$ the finite atomic measure
$\mu = \sum_{k=1}^N
c_k^{\beta_k}\delta_{\beta_k}.$
 The integral $\int_0^1
s^{\beta} d\mu(\beta)$ is meaningful for any finite measure $\mu$
defined on $[0,1].$ Let
 $\mathcal{S}$ designate the class of 
c\`adl\`ag $(\mathcal{F}_t)$-adapted strictly increasing
processes $V_t$ whose Laplace transform is
given by
\begin{align} \label{llsymbol3}
  \mathbb{E} \bigl[e^{-s V_t}\bigr]= \exp\Bigl[-t \int_0^1 s^{\beta} d \mu
(\beta)\Bigr], ~ s \ge 0,
\end{align}
where $\mu$ is a finite measure on $[0,1].$
By construction, $V_0=0$
a.s., and $V_t$ can be considered as a weighted mixture of
independent stable subordinators. For the process $V_t \in \cal{S}$
corresponding to a finite measure $\mu$, we use the notation
$V_t=D(\mu;t)$ to indicate this correspondence.

\begin{theorem} \label{continuum}
Assume that $D(\mu;t) \in \mathcal{S}$ where $\mu$ is a positive
finite measure with $supp \, \mu \subset (0,1),$  and let $E_t$ be
the inverse process to $D(\mu;t)$. Then the vector-function
$v(t)=\int_0^{\infty}f_{E_t}(\tau)T_{\tau} \varphi \, d\tau,$ where
$T_{t}$ and $\varphi$ are as in Theorem \ref{part2}, exists and
satisfies the abstract Cauchy problem
\begin{align} \label{abstract11111}
\mathcal{D}_{\mu}v(t):=\int_{0}^1  \mathbf{D}_{\ast}^{\beta}v(t)
d\mu(\beta) =\mathcal{A}v(t), \, t>0, \ v(0)=\varphi.
\end{align}
\end{theorem}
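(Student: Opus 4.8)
The plan is to follow the proof of Theorem \ref{part2} in structure, replacing the finite sum $\sum_{k=1}^N c_k^{\beta_k}s^{\beta_k}$ everywhere by the Laplace exponent $g(s):=\int_0^1 s^\beta\,d\mu(\beta)$ of $D(\mu;t)$, which by \eqref{llsymbol3} satisfies $\mathbb{E}[e^{-sD(\mu;t)}]=e^{-tg(s)}$. As before I set $p(\tau)=T_\tau\varphi$ for $\varphi\in\mathrm{Dom}(\mathcal{A})$, so that $\partial_\tau p=\mathcal{A}p$ and the resolvent identity $(s-\mathcal{A})\tilde{p}(s)=\varphi$ holds for $\mathrm{Re}(s)>\omega$. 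Once the Bochner integral $v(t)=\int_0^\infty f_{E_t}(\tau)T_\tau\varphi\,d\tau$ is shown to exist, I compute its Laplace transform and recognize the outcome as the transform of the distributed-order equation \eqref{abstract11111}.

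For the transform computation I avoid the explicit convolution density \eqref{density10} (which does not extend to a general measure) and instead read off the transform of the inverse density directly. Since $\mathbb{P}(E_t\le\tau)=\mathbb{P}(D(\mu;\tau)>t)$, we have $f_{E_t}(\tau)=-\partial_\tau\mathbb{P}(D(\mu;\tau)\le t)$, and integration by parts in $t$ together with $\mathbb{E}[e^{-sD(\mu;\tau)}]=e^{-\tau g(s)}$ gives
\begin{equation*}
\int_0^\infty e^{-st}f_{E_t}(\tau)\,dt=-\partial_\tau\Big(\tfrac{1}{s}e^{-\tau g(s)}\Big)=\frac{g(s)}{s}\,e^{-\tau g(s)}.
\end{equation*}
Applying Fubini's theorem (legitimate for large $s$ by the integrability established below) and the resolvent representation $\int_0^\infty e^{-g(s)\tau}T_\tau\varphi\,d\tau=\tilde{p}(g(s))$, valid whenever $g(s)>\omega$, yields
\begin{equation*}
\tilde{v}(s)=\frac{g(s)}{s}\,\tilde{p}(g(s)),
\end{equation*}
the exact analogue of the formula in Theorem \ref{part2}. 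Substituting the resolvent identity $\mathcal{A}\tilde{p}(g(s))=g(s)\tilde{p}(g(s))-\varphi$ gives $\mathcal{A}\tilde{v}(s)=g(s)\tilde{v}(s)-\tfrac{g(s)}{s}\varphi$, which after inserting $g(s)=\int_0^1 s^\beta\,d\mu(\beta)$ and $v(0)=\varphi$ reads
\begin{equation*}
\int_0^1\bigl[s^\beta\tilde{v}(s)-s^{\beta-1}v(0)\bigr]\,d\mu(\beta)=\mathcal{A}\tilde{v}(s).
\end{equation*}
By \eqref{laplace} the bracketed term is $\widetilde{[\mathbf{D}_{\ast}^{\beta}v]}(s)$; inverting the Laplace transform and interchanging it with the finite measure $\mu$ produces $\int_0^1\mathbf{D}_{\ast}^{\beta}v(t)\,d\mu(\beta)=\mathcal{A}v(t)$, which is \eqref{abstract11111}, while $v(0)=\varphi$ follows from $T_0\varphi=\varphi$ exactly as in Theorem \ref{part2}.

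The step I expect to be the main obstacle is the existence of the Bochner integral $v(t)$, equivalently the finiteness of $\int_0^\infty e^{\omega\tau}f_{E_t}(\tau)\,d\tau=\mathbb{E}[e^{\omega E_t}]$, since Lemma \ref{estimation} was stated only for finite mixtures; here the hypothesis $\mathrm{supp}\,\mu\subset(0,1)$ is essential. Fix $0<\beta_{\min}\le\beta_{\max}<1$ with $\mathrm{supp}\,\mu\subset[\beta_{\min},\beta_{\max}]$. A Chernoff bound gives, for every $s>0$,
\begin{equation*}
\mathbb{P}(E_t>\tau)=\mathbb{P}\bigl(D(\mu;\tau)\le t\bigr)\le e^{st}\,\mathbb{E}\bigl[e^{-sD(\mu;\tau)}\bigr]=e^{\,st-\tau g(s)}.
\end{equation*}
Using $g(s)\ge\mu([0,1])\,s^{\beta_{\min}}$ for $s\ge1$ and minimizing $st-\tau g(s)$ over $s\ge1$ yields $\mathbb{P}(E_t>\tau)\le C\exp\!\bigl(-k\tau^{1/(1-\beta_{\min})}\bigr)$ for $\tau$ large, with $k>0$, matching the decay in Lemma \ref{estimation}. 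Since $\beta_{\min}>0$ the exponent $1/(1-\beta_{\min})$ exceeds $1$, so the layer-cake identity $\mathbb{E}[e^{\omega E_t}]=1+\omega\int_0^\infty e^{\omega\tau}\mathbb{P}(E_t>\tau)\,d\tau<\infty$; combined with \eqref{semigrestimate} this gives $\int_0^\infty f_{E_t}(\tau)\|T_\tau\varphi\|\,d\tau\le M\|\varphi\|\,\mathbb{E}[e^{\omega E_t}]<\infty$, so $v(t)$ exists and every interchange above is justified. The two endpoints play distinct roles: $\beta_{\min}>0$ both rules out an index-$0$ (killing) component, so that $D(\mu;t)\to\infty$, $E_t$ is a proper inverse, and $g(0^+)=0$, and also supplies the superlinear tail just used, whereas $\beta_{\max}<1$ keeps $D(\mu;t)$ driftless, hence $E_t$ absolutely continuous with the density $f_{E_t}$ employed throughout.
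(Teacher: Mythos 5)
Your proof is correct, and its core — computing $\tilde v(s)=\frac{g(s)}{s}\,\tilde p(g(s))$ with $g(s)=\int_0^1 s^\beta\,d\mu(\beta)$, combining with the resolvent identity $(s-\mathcal{A})\tilde p(s)=\varphi$, matching the result against \eqref{laplace}, and inverting — is exactly the paper's argument (the paper's $\eta(s)$ is your $g(s)$, and its formula \eqref{lapgen} is your transform identity). The one place you genuinely diverge is the existence of the Bochner integral $v(t)$: the paper disposes of this by asserting that $f_{D(\mu;t)}$ retains the asymptotics \eqref{atzero} and \eqref{atinfinity} for suitable $\beta_0,\beta_1\in(0,1)$, i.e.\ by an (unproved) extension of Lemma \ref{estimation} from finite mixtures to general $\mu$, whereas you derive the tail bound $\mathbb{P}(E_t>\tau)\le C\exp(-k\tau^{1/(1-\beta_{\min})})$ directly from a Chernoff bound on $\mathbb{P}(D(\mu;\tau)\le t)$ using only the Laplace exponent \eqref{llsymbol3} and the lower bound $g(s)\ge \mu([0,1])s^{\beta_{\min}}$ for $s\ge1$. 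This is a real improvement in rigor for arbitrary measures $\mu$ with $\mathrm{supp}\,\mu\subset(0,1)$: it needs no density asymptotics at all, it works at the level of the distribution function of $E_t$ (so the integrability of $e^{\omega\tau}$ against $f_{E_t}$ follows from the layer-cake formula), and it makes explicit why $\beta_{\min}>0$ is the operative hypothesis, which the paper leaves implicit. Your side remarks on the roles of the two endpoints of $\mathrm{supp}\,\mu$ are consistent with the paper's Remark (b) allowing $\mathrm{supp}\,\mu\subset[0,1)$ when $\omega=0$.
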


\begin{proof} We briefly sketch the proof since the idea is
similar to the proof of Theorem \ref{part2}. Since $supp \, \mu
\subset (0,1)$, the density $\displaystyle f_{{D(\mu;t)}}(\tau),
\tau \ge 0,$ exists and has asymptotics (\ref{atzero}) with some
$\beta=\beta_0\in (0,1)$ and (\ref{atinfinity}) with some
$\beta=\beta_1\in (0,1)$.
 This implies the
existence of the vector-function $v(t).$ Further, one can readily
see that
$v(t)= -  \int_0^{\infty}
\frac{\partial}{\partial \tau} \{J f_{{D(\mu;\tau)}}(t)\}
\hspace{0.3mm}(T_{\tau}\varphi) d\tau.$ Now it follows from the
definition of $D(\mu;t)$ that the Laplace transform of $v(t)$
satisfies
\begin{align} \label{lapgen}
\tilde{v}(s) = \frac{\int_0^1 s^{\beta} d\mu(\beta)}{s}
\int_0^{\infty} e^{-\tau \int_0^1
s^{\beta}d\mu(\beta)}(T_{\tau}\varphi)d\tau = \frac{\eta(s)}{s}
\hspace{0.5mm} \tilde{p}(\eta(s)), ~ s
> \bar{\omega},
\end{align}
where $\eta(s)=\int_0^1 s^{\beta}d\mu(\beta)$, $p$ is a solution to
the abstract Cauchy problem (\ref{abstract20}), and $\bar{\omega}
>0$ is a number such that $s > \bar{\omega}$ if $\eta(s) > \omega$
($\bar{\omega}$ is uniquely defined since $\eta(s)$ is a strictly
increasing function). Combining (\ref{lapgen}) and (\ref{lapforp}),
\begin{equation}\label{new}
(\eta(s)-\mathcal{A}) \tilde{v}(s)=\varphi
\hspace{0.2mm}\frac{\eta(s)}{s}, ~ s > \bar{\omega}.
\end{equation}
Applying the Laplace transform to (\ref{abstract11111}) yields
\eqref{new}, as desired. \qed
\end{proof}

\begin{remark}
\mbox{ }

\hspace{-1.5mm}a) Equation \eqref{abstract11111} is called a \textit{distributed
order differential equation (DODE)}.

\hspace{-1.5mm}b) If $\omega =0$ in \eqref{semigrestimate}, that is the semigroup
$T_t$ satisfies the inequality
$\|T_t\|\le \nolinebreak M,$ then the condition
$supp \, \mu \subset (0,1)$ in Theorem \ref{continuum} can be
replaced by $supp \, \mu \subset [0,1).$
\end{remark}

Suppose that $L_t$ is an $(\mathcal{F}_t)$-adapted L\'evy process. Let
$E_t$ be a continuous $(\mathcal{F}_t)$-time-change.  Consider the
following SDE driven by the time-changed L\'evy process $L_{E_t}$:
\begin{align} \label{sde00}
   X_t&=x + \int_0^t b(E_{s},X_{s-}) dE_s + \int_0^t \sigma(E_{s}, X_{s-}) dB_{E_s}\\
    + &\int_0^t \hspace{-0.5mm}\int_{|w|<1} \hspace{-2.5mm}H(E_{s},X_{s-}, w)\tilde{N}(dE_s, dw)+
        \int_0^t \hspace{-0.5mm}\int_{|w|\ge 1} \hspace{-2.5mm}K(E_{s}, X_{s-}, w) N(dE_s, dw),\notag
\end{align}
with $x\in\mathbb{R}$, and  continuous maps $b(u,y): \mathbb{R}_+ \times
\mathbb{R}^n \to \mathbb{R}^n,$ $\sigma(u,y): \mathbb{R}_+ \times \nolinebreak
\mathbb{R}^n \to \mathbb{R}^{n \times m},$ and
$G(u,y,w)=\chi_{(|w|<1)}(w)H(u,y,w)+\chi_{(|w|\ge 1)}(w)K(u,y,w):$
$\mathbb{R}_+ \times \mathbb{R}^n \times \mathbb{R}^n \to
\mathbb{R}^n$ satisfying the Lipschitz and growth conditions
\eqref{lip}, \eqref{growth} with respect to the variable $y \in
\mathbb{R}^n$, for each fixed $u\ge 0$.

SDE \eqref{sde00} is obtained from an SDE driven by a L\'evy process
upon replacing its driving process $L_t$ by $L_{E_t}.$  Since $L_t$
is an $(\mathcal{F}_t)$-semimartingale, it follows from
Corollary 10.12 of \cite{Jacod}
that  $L_{E_t}$ is an
$({\mathcal{F}}_{E_{t}})$-semimartingale. Thus, (\ref{sde00}) is the
integral form of an SDE driven by an
$({\mathcal{F}}_{E_{t}})$-semimartingale. We use the following
shorthand form for the differential problem given by SDE
(\ref{sde00}):
\begin{equation}
\label{sde10} dX_t=F(E_{t}, X_{t-}) \odot dL_{E_t}, \ X_{0}=x,
\end{equation}
where $F(u,y)=(b(u,y),\sigma(u,y),G(u,y,\cdot))$ indicates the
triple of coefficients controlling the drift, Brownian, and jump
terms, respectively. In the future, the integral form in
(\ref{sde00}) will also be expressed using the symbol $\odot$.

The SDE in (\ref{sde10}) is closely related to the following SDE
driven by the L\'evy process $L_t$:
\begin{equation}
\label{sde20} dY_{\tau}=F(\tau, Y_{\tau-}) \odot dL_{\tau}, \
Y_{0}=x.
\end{equation}

A duality between SDE \eqref{sde10} and SDE \eqref{sde20} exists as
a special case of a general duality result in \cite{Kobayashi}, but
what is required here is given below.

\begin{theorem}
\label{thsde000}
 Let $D_t$ be a c\`adl\`ag $(\mathcal{F}_t)$-adapted strictly increasing
 process, and $E_t$ be its inverse.
\begin{enumerate}
\item[1)] If $Y_{\tau}$ satisfies SDE (\ref{sde20}), then $X_{t}:=Y_{E_{t}}$
satisfies SDE (\ref{sde10}).
\item[2)] If $X_t$ satisfies SDE (\ref{sde10}), then $Y_{\tau}:=X_{D_{\tau}}$
satisfies SDE (\ref{sde20}).
\end{enumerate}
\end{theorem}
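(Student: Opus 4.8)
The plan is to derive both directions from a single change-of-variable identity for stochastic integrals under the time-change $E_t$, which is precisely the duality established in \cite{Kobayashi} and underpinned by the semimartingale time-change theory of \cite{Jacod}. Since $L_t$ is an $(\mathcal{F}_t)$-semimartingale, $L_{E_t}$ is an $(\mathcal{F}_{E_t})$-semimartingale, so all integrals in \eqref{sde00} are well defined, and the identity to be invoked is, term by term for each integrator in $\odot\, dL$,
$$\int_0^t F(E_s, X_{s-}) \odot dL_{E_s} = \left(\int_0^\cdot F(\tau, Y_{\tau-}) \odot dL_\tau\right)_{E_t}, \quad \text{whenever } X_s = Y_{E_s}.$$
Granting this formula, both assertions reduce to substitution.

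First I would record two elementary facts about the inverse. Because $D_t$ is strictly increasing, $E_t$ is continuous and nondecreasing, $E_{D_\tau}=\tau$ for all $\tau \ge 0$, and $D_{E_t-}\le t\le D_{E_t}$. Continuity of $E$ is what makes the composition behave: since $E$ has no jumps, $E_{s-}=E_s$, and the left limits align so that substituting $\tau=E_s$ into the integrand $F(\tau,Y_{\tau-})$ of \eqref{sde20} reproduces exactly the integrand $F(E_s,X_{s-})$ of \eqref{sde00}, with the understanding that on any interval where $E$ is flat the integrators $dE_s$, $dB_{E_s}$, $N(dE_s,dw)$ vanish so the integrand value there is immaterial.

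For direction 1), I would start from the integral form \eqref{sde00} written for $Y$ at time $\tau$ (that is, \eqref{sde20}), evaluate it at $\tau=E_t$, and rewrite each of the four integrals over $[0,E_t]$ — the drift $d\tau$, the Brownian $dB_\tau$, the compensated small-jump $\tilde N(d\tau,dw)$, and the large-jump $N(d\tau,dw)$ — as the corresponding integral over $[0,t]$ against the time-changed integrator via the identity above. Matching integrands through the left-limit identification yields the integral form \eqref{sde00} for $X_t=Y_{E_t}$, which is SDE \eqref{sde10}. For direction 2), I would run the same computation in reverse: starting from \eqref{sde00} for $X$, set $t=D_\tau$, and use $E_{D_\tau}=\tau$ to collapse each time-changed integrator back to its untransformed counterpart, concluding that $Y_\tau:=X_{D_\tau}$ solves \eqref{sde20}.

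The step I expect to be the main obstacle is justifying the change-of-variable formula for the two jump integrals, rather than for the drift (a routine Lebesgue--Stieltjes substitution) or the Brownian term. One must check that the Poisson random measure composed with the time change, $N(dE_s,dw)$, carries the correct predictable compensator so that the compensated integral $\tilde N(dE_s,dw)$ transforms consistently, and that no jumps are created or lost where $D$ has flat stretches (equivalently, where $E$ is constant). All of this is subsumed in \cite{Jacod} and \cite{Kobayashi}; the remaining work is to verify that the present hypotheses — $(\mathcal{F}_t)$-adaptedness of $D$, continuity of $E$, and the Lipschitz and growth conditions \eqref{lip}, \eqref{growth} guaranteeing well-posedness of the SDEs — are exactly what those results require.
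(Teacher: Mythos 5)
Your proposal is correct and follows essentially the same route as the paper: both directions are reduced to the change-of-variable formula for stochastic integrals under the time-change (Proposition 10.21 of \cite{Jacod} for composing with the continuous inverse $E$, and Theorem 3.1 of \cite{Kobayashi} for the reverse composition with $D$), and you identify the same key subtlety the paper addresses, namely that $Y_{E(s)-}$ and $X_{s-}=(Y\circ E)_{s-}$ can disagree only on intervals where $E$ is constant, where the time-changed integrators are themselves constant so the discrepancy does not affect the integrals.
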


\begin{proof}
Since $D_t$ is a strictly increasing $(\mathcal{F}_t)$-adapted
process, its inverse $E_t$ is a continuous
$(\mathcal{F}_t)$-time-change. Suppose that $Y_\tau$ satisfies SDE
(\ref{sde20}) and let $X_t=Y_{E_t}$. Then by
Proposition 10.21 in \cite{Jacod},
\begin{equation}
   X_t=x+\int_0^{E_t} F(s,Y_{s-})\odot dL_s=x+\int_0^t F(E_s,Y_{E(s)-})\odot dL_{E_s}. \label{thsde000-1}
\end{equation}
$X_t$ will satisfy SDE (\ref{sde10}), provided that $X_{s-} = (Y\circ
E)_{s-}$ can replace $Y_{E(s)-}$ in \eqref{thsde000-1}.  The
equality $Y_{E(s)-}=(Y\circ E)_{s-} $ fails only when $s>0$ and $E$
is constant on some closed interval $[s-\varepsilon,s]\subset (0,t]$
with $\varepsilon>0$.
     However, the integrator
     $L\circ E$ on the right-hand side of \eqref{thsde000-1} is constant on this interval.
     Hence, the difference between the two values $Y_{E(s)-}$ and $X_{s-}=(Y\circ E)_{s-}$ does not affect the value of the integral.  Consequently, \eqref{thsde000-1} is valid with $X_{s-}$ in place of
     $Y_{E(s)-}$.  Thus, $X_t$ satisfies SDE (\ref{sde10}),
establishing part 1).
Similarly, part 2) can be proven using Theorem 3.1 in \cite{Kobayashi}, instead of Proposition 10.21 in \cite{Jacod}.   \qed
\end{proof}

\begin{theorem} \emph{(\cite{Applebaum,Situ})} \label{thsde002}
If $F(u,y)$ satisfies the Lipschitz and growth conditions
\eqref{lip} and \eqref{growth}  for each $u\ge 0$, then SDE
(\ref{sde20}) has a unique strong solution with c\`adl\`ag paths. 
\end{theorem}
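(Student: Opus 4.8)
The plan is to establish existence and uniqueness by the classical Picard iteration scheme, adapted to the time-inhomogeneous L\'evy-driven setting, and to dispose of the non-integrable large-jump term by interlacing. Written in integral form, SDE (\ref{sde20}) reads
\[
Y_\tau = x + \int_0^\tau b(s,Y_{s-})\,ds + \int_0^\tau \sigma(s,Y_{s-})\,dB_s + \int_0^\tau\!\!\int_{|w|<1} H(s,Y_{s-},w)\,\tilde N(ds,dw) + \int_0^\tau\!\!\int_{|w|\ge 1} K(s,Y_{s-},w)\,N(ds,dw),
\]
which is exactly (\ref{sdeLevy}) with the spatial coefficients replaced by their time-dependent analogues $F(s,\cdot)$. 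Since each coefficient is continuous in $(u,y)$ and $Y_{s-}$ is a left-continuous adapted process, every integrand $F(s,Y_{s-})$ is predictable, so all four stochastic integrals are well defined.

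First I would handle the large-jump integral by interlacing. Fix a finite horizon $T$. On $[0,T]$ the Poisson random measure $N$ charges $\{|w|\ge 1\}$ at only finitely many times $0<\tau_1<\cdots<\tau_p\le T$ almost surely. I would therefore solve the truncated equation obtained by deleting the $\int_{|w|\ge1}K\,dN$ term, whose remaining coefficients $b,\sigma,H$ satisfy \eqref{lip} and \eqref{growth}; then reinstate the large jumps by setting $Y_{\tau_i}=Y_{\tau_i-}+K(\tau_i,Y_{\tau_i-},\Delta L_{\tau_i})$ at each $\tau_i$ and restarting the truncated equation from the new initial value on the next interval. This reduces matters to the truncated equation on a single interval.

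For the truncated equation I would run the Picard iterates $Y^{(0)}_\tau\equiv x$ and $Y^{(n+1)}_\tau = x + \int_0^\tau b(s,Y^{(n)}_{s-})\,ds + \int_0^\tau \sigma(s,Y^{(n)}_{s-})\,dB_s + \int_0^\tau\!\!\int_{|w|<1} H(s,Y^{(n)}_{s-},w)\,\tilde N(ds,dw)$. The Cauchy--Schwarz bound for the drift, the It\^o isometry for the Brownian integral, and the isometry for the compensated integral against $\tilde N$, combined with the growth bound \eqref{growth}, yield a uniform $L^2$ estimate $\sup_{n}\mathbb{E}\big[\sup_{s\le T}|Y^{(n)}_s|^2\big]<\infty$. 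Applying instead the Lipschitz bound \eqref{lip} to the same estimates gives $\mathbb{E}\big[\sup_{s\le\tau}|Y^{(n+1)}_s-Y^{(n)}_s|^2\big]\le C\int_0^\tau \mathbb{E}\big[\sup_{r\le s}|Y^{(n)}_r-Y^{(n-1)}_r|^2\big]\,ds$, and iterating this inequality shows that $(Y^{(n)})$ is Cauchy in $L^2$, uniformly on $[0,T]$; its limit $Y$ is a c\`adl\`ag adapted strong solution. Uniqueness follows from the same Gronwall-type inequality applied to the difference of two solutions.

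The main obstacle --- indeed the only genuine departure from the autonomous equation (\ref{sdeLevy}) --- is the explicit dependence of the coefficients on the time variable $u$. The usual shortcut of augmenting the state by the time component $Z_\tau=\tau$ and invoking the autonomous result is unavailable, since I only have the Lipschitz estimate \eqref{lip} in the spatial variable $y$ for each fixed $u$, not joint Lipschitz continuity in $(u,y)$. I would instead carry out the iteration directly, using that the constants $C_1,C_2$ in \eqref{lip}, \eqref{growth} may be taken uniform in $u$ on $[0,T]$, so that Gronwall's inequality closes with a $u$-independent constant, and that the assumed continuity of $F$ in $u$ guarantees predictability of the integrands. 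Once existence and uniqueness hold on each $[0,T]$, consistency of the solutions on nested intervals yields a unique strong solution on $[0,\infty)$.
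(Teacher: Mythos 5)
The paper offers no proof of this theorem: it is stated as a known result and attributed to \cite{Applebaum,Situ}, so there is nothing internal to compare against. Your proposal is a correct reconstruction of the standard argument in those references --- interlacing to strip out the finitely many large jumps on a compact interval (possible because $\nu(\{|w|\ge 1\})<\infty$), then Picard iteration with It\^o/compensator isometries and a Gronwall closure for the truncated equation --- and you rightly identify the time-inhomogeneity of $F(u,y)$ as the only real deviation from the autonomous case \eqref{sdeLevy}. Two small points deserve attention. First, to control $\mathbb{E}\bigl[\sup_{s\le\tau}|\cdot|^2\bigr]$ you need Doob's $L^2$ maximal inequality on top of the isometries for the Brownian and compensated Poisson integrals; the isometries alone only bound the second moment at a fixed time. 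Second, your claim that the constants $C_1,C_2$ in \eqref{lip}--\eqref{growth} ``may be taken uniform in $u$ on $[0,T]$'' is genuinely an extra assumption: the paper's hypothesis is Lipschitz/growth in $y$ \emph{for each fixed} $u$, and joint continuity of $F$ does not by itself prevent the pointwise-in-$u$ Lipschitz constant from being unbounded on a compact time interval. This is how the cited references resolve the issue as well (they assume uniform or locally integrable time-dependent constants), and in every application in this paper the coefficients of \eqref{sde20} are autonomous, so the gap is harmless here, but it should be stated as a hypothesis rather than derived.
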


\begin{corollary} \label{cor_EU}
If $F(u,y)$ satisfies the Lipschitz and growth conditions
\eqref{lip} and 
\eqref{growth}  for each $u\ge 0$, then SDE
(\ref{sde10}) has a unique 
strong solution with c\`adl\`ag paths.
\end{corollary}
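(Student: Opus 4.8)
The plan is to derive both the existence and the uniqueness of a strong solution to SDE \eqref{sde10} by transferring the corresponding statements for SDE \eqref{sde20} through the pathwise duality established in Theorem \ref{thsde000}. The key observation is that SDE \eqref{sde20} is driven by the L\'evy process $L_\tau$ itself, so Theorem \ref{thsde002} already supplies a unique strong c\`adl\`ag solution, which I denote by $\bar Y_\tau$, under exactly the hypothesis assumed here, namely that $F(u,y)$ is Lipschitz and of linear growth in $y$ for each fixed $u$.

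For existence, I would take the solution $\bar Y_\tau$ of \eqref{sde20} provided by Theorem \ref{thsde002} and set $X_t := \bar Y_{E_t}$. Part~1) of Theorem \ref{thsde000} then shows directly that $X_t$ solves \eqref{sde10}. Its paths are c\`adl\`ag because $\bar Y$ is c\`adl\`ag and $E_t$ is continuous and nondecreasing, so $t \mapsto \bar Y_{E_t}$ inherits right-continuity and the existence of left limits; adaptedness to $(\mathcal{F}_{E_t})$ is immediate since each $E_t$ is an $(\mathcal{F}_t)$-stopping time. This disposes of existence.

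For uniqueness, let $X_t$ be any c\`adl\`ag solution of \eqref{sde10}. Part~2) of Theorem \ref{thsde000} shows that $Y_\tau := X_{D_\tau}$ solves \eqref{sde20}, whence the uniqueness in Theorem \ref{thsde002} forces $X_{D_\tau} = \bar Y_\tau$ for all $\tau$, almost surely. To recover $X$ itself I would establish the identity $X_t = X_{D_{E_t}}$ for every $t$; granting it, $X_t = X_{D_{E_t}} = \bar Y_{E_t}$, so every solution coincides with the single process $\bar Y_{E_t}$ constructed above, which yields uniqueness. The identity $X_t = X_{D_{E_t}}$ should follow from the fact that $X$ is constant on each time-interval on which $E$ is constant, together with the standard sandwiching $D_{E_t-} \le t \le D_{E_t}$.

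The main obstacle is precisely this last step: showing that $X$ does not move across the time-intervals that $D$ skips over at its jumps. On such an interval $E$ is constant, so the time-changed integrators $L_{E_t}$, $B_{E_t}$ and $N(dE_s, dw)$ are constant there, forcing every stochastic integral in \eqref{sde00} to vanish and hence $X$ to be constant on the whole closed interval $[D_{\tau-}, D_\tau]$ corresponding to a jump of $D$ at $\tau = E_t$. This is the same phenomenon exploited in the proof of Theorem \ref{thsde000}, where the discrepancy between $Y_{E(s)-}$ and $(Y \circ E)_{s-}$ on flat pieces of $E$ was shown to be immaterial. Care is needed to verify that the interval of constancy is closed on the right, so that $X_{D_\tau}$, the value at the endpoint actually attained by $D$, equals $X_t$; this holds because $E_{D_\tau} = \tau$ by the strict monotonicity of $D$, so $E$ stays constant through $t = D_\tau$ and $L \circ E$ has no jump there.
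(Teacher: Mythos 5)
Your proposal is correct and follows essentially the same route as the paper: existence by composing the unique strong solution of \eqref{sde20} with $E_t$ via part 1) of Theorem \ref{thsde000}, and uniqueness by observing that the time-changed integrators (hence $X$ itself) are constant on each interval $[D_{\tau-},D_\tau]$, which yields $X_t=X_{D_{E_t}}=Y_{E_t}$ after identifying $X_{D_\tau}$ with the unique solution of \eqref{sde20}. Your write-up merely spells out in more detail the constancy argument and the right-endpoint issue that the paper states in one line.
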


\begin{proof}
A strong solution to SDE \eqref{sde10} clearly exists by Theorems \ref{thsde000} and \ref{thsde002}.  To prove the uniqueness, notice that the driving process $L_{E_t}$ of SDE \eqref{sde10} is constant on any interval $[D_{s-},D_s]$ and so is the solution $X_t$.  
This implies a unique representation $X_t=X_{D_{E(t)}}=Y_{E_t}$, where
$Y_\tau$ is a unique strong solution to SDE \eqref{sde20}.
\qed
\end{proof}

\begin{theorem}\label{multi}
Let $D_{k,t}$, $k=1,\ldots,N$, be independent stable subordinators of
respective indices $\beta_k\in (0,1)$. Define $D_t=\sum_{k=1}^N c_k
D_{k,t},$ with positive constants $c_k$, and let $E_t$ be its
inverse. Suppose that a stochastic process $Y_{\tau}$ satisfies the
SDE \eqref{sdeLevy} driven by a L\'evy process,
where continuous mappings $b, \, \sigma, \, H, \, K$ are bounded and
satisfy condition \eqref{lip}.
 Let $X_t=Y_{E_t}$.  Then
\begin{enumerate}
\item[1)]
$X_t$ satisfies the SDE driven by the time-changed L\'evy process
\begin{align} \label{sdelevyfr}
     X_t&=x + \int_0^t b(X_{s-}) dE_s + \int_0^t \sigma(X_{s-}) dB_{E_s}\\
              + &\int_0^t \hspace{-0.5mm} \int_{|w|<1} \hspace{-2.5mm}H(X_{s-},w)\tilde{N}(dE_s, dw)+ \int_0^t \hspace{-0.5mm}\int_{|w|\ge 1} \hspace{-2.5mm}K(X_{s-},w) N(dE_s, dw);\notag
\end{align}
\item[2)]
if $Y_\tau$ is independent of $E_t$, then the function
$u(t,x)=\mathbb{E}[\varphi(X_t)|X_{0}=x]$ satisfies the following
Cauchy problem
\begin{align} \label{theoremFP}
\hspace{-1.9mm}\sum_{k=1}^N C_k \mathbf{D}_{\ast}^{\beta_k}u(t,x)
=\mathcal{L}(x,\mathbf{D}_x) u(t,x), \ u(0,x)=\varphi(x), \ t>0, \, x \in \mathbb{R}^n,
\end{align}
where $\varphi \in C_0^2(\mathbb{R}^n),$  $C_k=c_k^{{\beta_k}}$,
$k=1, \ldots, N$, and the pseudo-differential operator
$\mathcal{L}(x,\mathbf{D}_x)$ is as in (\ref{levyPsdo}) with symbol
in (\ref{levysymbol}).
\end{enumerate}
\end{theorem}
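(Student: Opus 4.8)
The plan is to handle the two parts separately, with Part 1) following immediately from the duality already in hand and Part 2) reducing to the abstract result of Theorem \ref{part2}.

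For Part 1), I would observe that SDE \eqref{sdeLevy} is precisely SDE \eqref{sde20} in the special case where the coefficients $b,\sigma,H,K$ do not depend on the time variable $\tau$. Since $D_t=\sum_{k=1}^N c_k D_{k,t}$ is a strictly increasing c\`adl\`ag $(\mathcal{F}_t)$-adapted process, being a positive linear combination of independent stable subordinators, with inverse $E_t$, part 1) of Theorem \ref{thsde000} applies directly: $X_t:=Y_{E_t}$ satisfies the time-changed SDE \eqref{sde10}, which, upon writing out the $\odot$ shorthand with time-independent coefficients, is exactly \eqref{sdelevyfr}. No extra work is needed here beyond checking that the hypotheses of Theorem \ref{thsde000} are met.

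For Part 2), the key is to pass from the probabilistic object $u(t,x)=\mathbb{E}[\varphi(X_t)\,|\,X_0=x]$ to the Banach-space-valued function $v(t)$ of Theorem \ref{part2}. First I would invoke the boundedness of $b,\sigma,H,K$ to identify $(T_\tau\varphi)(x):=\mathbb{E}[\varphi(Y_\tau)\,|\,Y_0=x]$ as a strongly continuous contraction semigroup on $C_0(\mathbb{R}^n)$ whose infinitesimal generator is the pseudo-differential operator $\mathcal{L}(x,\mathbf{D}_x)$ of \eqref{levyPsdo} with symbol \eqref{levysymbol}; this is the Feller property recalled after \eqref{sdeLevy}, so \eqref{semigrestimate} holds with $M=1$ and $\omega=0$, and $\varphi\in C_0^2(\mathbb{R}^n)\subset\mathrm{Dom}(\mathcal{L}(x,\mathbf{D}_x))$. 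Then, using the hypothesis that $Y_\tau$ is independent of $E_t$, I would condition on the value of $E_t$: by independence, the conditional law of $Y_{E_t}$ given $E_t=\tau$ coincides with the law of $Y_\tau$, whence
\begin{equation}
u(t,x)=\mathbb{E}[\varphi(Y_{E_t})\,|\,Y_0=x]=\int_0^\infty (T_\tau\varphi)(x)\,f_{E_t}(\tau)\,d\tau,
\end{equation}
where $f_{E_t}$ is the density from \eqref{density10}. The right-hand side is exactly the pointwise evaluation of the Bochner integral $v(t)=\int_0^\infty f_{E_t}(\tau)T_\tau\varphi\,d\tau$ appearing in Theorem \ref{part2}, whose existence is guaranteed by the exponential estimate of Lemma \ref{estimation} together with \eqref{semigrestimate}.

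With the identification $u(t,\cdot)=v(t)$ in hand, the conclusion follows by applying Theorem \ref{part2} with $\mathcal{X}=C_0(\mathbb{R}^n)$ and $\mathcal{A}=\mathcal{L}(x,\mathbf{D}_x)$: the abstract Cauchy problem \eqref{abstract22222} satisfied by $v(t)$ becomes, upon evaluating at $x$ and reading $\mathcal{A}v=\mathcal{L}(x,\mathbf{D}_x)u$, precisely the fractional Cauchy problem \eqref{theoremFP}, with the initial condition $v(0)=\varphi$ reading $u(0,x)=\varphi(x)$. The main obstacle, and the one step deserving genuine care, is the conditioning argument that produces this integral representation: one must justify that independence of $Y_\tau$ and $E_t$ licenses replacing $\mathbb{E}[\varphi(Y_{E_t})]$ by the integral of $\mathbb{E}[\varphi(Y_\tau)]$ against $f_{E_t}$, and that this scalar computation is compatible with interpreting $v(t)$ as a Bochner integral in $C_0(\mathbb{R}^n)$, so that Theorem \ref{part2} applies verbatim. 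Everything else amounts to matching the abstract statement to the concrete Feller setting.
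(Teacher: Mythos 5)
Your proposal is correct and follows essentially the same route as the paper: part 1) via the duality in Theorem \ref{thsde000} applied to the time-independent coefficients, and part 2) by identifying $T_\tau\varphi(x)=\mathbb{E}[\varphi(Y_\tau)\,|\,Y_0=x]$ as a Feller contraction semigroup with generator $\mathcal{L}(x,\mathbf{D}_x)$, using independence to write $u(t,x)=\int_0^\infty f_{E_t}(\tau)T_\tau\varphi(x)\,d\tau$, and invoking Theorem \ref{part2}. No substantive differences from the paper's argument.
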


\begin{proof}
Again, for simplicity, we give the proof in the case $N=2$.

1) Since $D_t$ is a linear combination of stable subordinators,
which are c\`adl\`ag and strictly increasing, it follows that $D_t$
is also c\`adl\`ag and strictly increasing. Hence, it follows from Theorem \ref{thsde000}
that $X_t=Y_{E_t}$
satisfies SDE (\ref{sdelevyfr}).

2) Consider $T_{\tau}^Y\hspace{-0.5mm}
\varphi(x)=\mathbb{E}[\varphi(Y_{\tau})|Y_0=x],$ where $Y_{\tau}$ is
a solution of
SDE (\ref{sdeLevy}).
Then $T^Y_{\tau}$ is a strongly
continuous contraction semigroup in the Banach space
$C_0(\mathbb{R}^n)$ (see \cite{Applebaum})  which satisfies
\eqref{semigrestimate} with $\omega=0,$ has infinitesimal generator
given by the pseudo-differential operator
$\mathcal{L}(x,\mathbf{D}_x)$ with symbol $\Psi(x,\xi)$ defined in
(\ref{levysymbol}), and $C_0^2(\mathbb{R}^n)\subset
\mbox{Dom}(\mathcal{L}(x,\mathbf{D}_x))$. So the function
$p^Y\hspace{-0.5mm}(\tau,x)=T_{\tau}^Y\hspace{-0.5mm} \varphi(x)$ with $\varphi\in
C^2_0(\mathbb{R}^n)$ satisfies the Cauchy problem
\begin{equation}
\label{Cauchy20} \displaystyle{ \frac{\partial p^Y\hspace{-0.5mm}(\tau,x)}{\partial
\tau} = \mathcal{L}(x,\mathbf{D}_x) p^Y\hspace{-0.5mm}(\tau,x)}, ~~ p^Y\hspace{-0.5mm}(0,x) =
\varphi(x).
\end{equation}
Furthermore, consider
$p^X\hspace{-0.5mm}(t,x)=\mathbb{E}[\varphi(X_t)|X_0=x]=\mathbb{E}[\varphi(Y_{E_t})|Y_0=x]$
(recall that $E_0=0$). Using the independence of the processes
$Y_{\tau}$ and $E_t,$
\begin{align}\label{Cauchy25}
p^X\hspace{-0.5mm}(t,x)=\int_0^{\infty}\hspace{-1.5mm}\mathbb{E}[\varphi(Y_{\tau})|E_t=\tau,Y_0=x]f_{E_t}\hspace{-0.5mm}(\tau)d
\tau = \int_0^{\infty} \hspace{-2mm}f_{E_t}\hspace{-0.5mm}(\tau) T^Y_{\tau}\hspace{-0.5mm} \varphi(x) d\tau.
\end{align}
Now, in accordance with Theorem \ref{part2}, $p^X\hspace{-0.5mm}(t,x)$ satisfies
the Cauchy problem (\ref{theoremFP}).
\qed
\end{proof}

\begin{theorem} \label{dodetheorem}
Assume that $D(\mu;t) \in \mathcal{S}$, where $\mu$ is a positive
finite measure with $supp \, \mu \subset [0,1),$  and let $E_t$ be
its inverse. Suppose that a stochastic process $Y_{\tau}$ satisfies
SDE (\ref{sdeLevy}),
and let $X_t=Y_{E_t}$.  Then
\begin{enumerate}
\item[1)]
$X_t$ satisfies SDE (\ref{sdelevyfr});
\item[2)]
if $Y_\tau$ is independent of $E_t$, then the function
$u(t,x)=\mathbb{E}[\varphi(X_t)|X_{0}=x]$ satisfies the following
Cauchy problem
\begin{equation} \label{theoremDODE}
\mathcal{D}_{\mu} u(t,x)=\mathcal{L}(x,\mathbf{D}_x) u(t,x), \ u(0,x)=\varphi(x), \ t>0,
\, x \in \mathbb{R}^n.
\end{equation}
\end{enumerate}
\end{theorem}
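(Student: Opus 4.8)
The plan is to reduce the theorem to the single time-change results already in hand, following the proof of Theorem \ref{multi} almost verbatim but invoking the distributed-order statement of Theorem \ref{continuum} in place of Theorem \ref{part2}.

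For part 1), membership $D(\mu;t)\in\mathcal{S}$ guarantees that $D(\mu;t)$ is a c\`adl\`ag $(\mathcal{F}_t)$-adapted strictly increasing process starting at $0$, so its inverse $E_t$ is a continuous $(\mathcal{F}_t)$-time-change. Since the coefficients of SDE \eqref{sdeLevy} are autonomous (no explicit time dependence), part 1) of Theorem \ref{thsde000} applies directly and yields that $X_t=Y_{E_t}$ satisfies the time-changed SDE \eqref{sdelevyfr}.

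For part 2), I would set $T^Y_\tau\varphi(x)=\mathbb{E}[\varphi(Y_\tau)\,|\,Y_0=x]$. Because $b,\sigma,H,K$ are bounded (as in the setting of Theorem \ref{multi}) and Lipschitz, $\{T^Y_\tau\}$ is a strongly continuous contraction semigroup on $C_0(\mathbb{R}^n)$ with generator $\mathcal{L}(x,\mathbf{D}_x)$ and $C_0^2(\mathbb{R}^n)\subset\mbox{Dom}(\mathcal{L}(x,\mathbf{D}_x))$; in particular \eqref{semigrestimate} holds with $\omega=0$. Using the independence of $Y_\tau$ and $E_t$ and conditioning on the value of $E_t$ exactly as in \eqref{Cauchy25}, I would write $u(t,x)=\mathbb{E}[\varphi(Y_{E_t})\,|\,Y_0=x]=\int_0^\infty f_{E_t}(\tau)\,T^Y_\tau\varphi(x)\,d\tau$. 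This identifies $u(t,\cdot)$ with the vector-function $v(t)$ of Theorem \ref{continuum}, taken with $\mathcal{A}=\mathcal{L}(x,\mathbf{D}_x)$ and $\mathcal{X}=C_0(\mathbb{R}^n)$, so that the abstract Cauchy problem \eqref{abstract11111} becomes precisely \eqref{theoremDODE}.

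The single delicate point is the support hypothesis. Theorem \ref{continuum} as stated requires $supp\,\mu\subset(0,1)$, while here $supp\,\mu\subset[0,1)$ permits an atom at $\beta=0$. This gap is bridged by part b) of the Remark following Theorem \ref{continuum}: since $\{T^Y_\tau\}$ is a Feller (contraction) semigroup, \eqref{semigrestimate} holds with $\omega=0$, and under that condition the support of $\mu$ may be enlarged to $[0,1)$. I expect this to be the only step needing genuine attention; everything else is a transcription of the proof of Theorem \ref{multi} with the finite sum $\sum_k C_k\mathbf{D}_{\ast}^{\beta_k}$ replaced throughout by the distributed-order operator $\mathcal{D}_\mu$.
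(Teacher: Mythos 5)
Your proposal is correct and follows essentially the same route as the paper, which likewise deduces part 1) from Theorem \ref{thsde000} and part 2) from Theorem \ref{continuum} by mirroring the proof of part 2) of Theorem \ref{multi}. You even make explicit the one point the paper leaves implicit, namely that the relaxation from $supp\,\mu\subset(0,1)$ to $supp\,\mu\subset[0,1)$ is justified by part b) of the Remark because the Feller semigroup satisfies \eqref{semigrestimate} with $\omega=0$.
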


\begin{proof}
The proof of part 1) again follows from Theorem \ref{thsde000}.
Part 2) follows from Theorem \ref{continuum} in a manner similar to
the proof of part 2) of Theorem \ref{multi}.
\qed
\end{proof}

\begin{remark}
Theorems \ref{multi} and \ref{dodetheorem} reveal the class of SDEs
which are associated with the wide class of
DODE pseudo-differential equations.
Each SDE in this class is
driven by a semimartingale which is a time-changed L\'evy process,
where the time-change is given by the inverse of a mixture of
independent stable subordinators.  Therefore, these SDEs cannot be
represented as classical SDEs driven by a Brownian motion or a
L\'evy process.
\end{remark}

\begin{corollary}
Let  the coefficients $b, \, \sigma, \, H, \, K$ of  the
pseudo-differential operator $\mathcal{L}(x,\mathbf{D}_x)$ defined
in (\ref{levyPsdo}) with symbol in (\ref{levysymbol}) be continuous,
bounded, and satisfying condition \eqref{lip}. Suppose $\varphi \in
C_0^2(\mathbb{R}^n).$ Then the Cauchy problem
\begin{align}
\mathcal{D}_{\mu} u(t,x)=\mathcal{L}(x,\mathbf{D}_x) u(t,x), \ u(0,x)=\varphi(x), \ t>0,
\, x \in \mathbb{R}^n,\notag
\end{align}
has a unique solution
such that
$u(t,x) \in C_0^2(\mathbb{R}^n)$ for each
$t>0.$
\end{corollary}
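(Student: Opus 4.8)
The plan is to obtain existence from Theorem~\ref{dodetheorem} and uniqueness from a Laplace-transform argument exploiting the resolvent of $\mathcal{L}(x,\mathbf{D}_x)$. For existence, observe that the stated hypotheses on $b,\sigma,H,K$ are exactly those under which SDE \eqref{sdeLevy} admits a unique strong solution $Y_\tau$ by Theorem~\ref{thsde002}, and under which the associated semigroup $T_\tau^Y\varphi(x)=\mathbb{E}[\varphi(Y_\tau)\mid Y_0=x]$ is a strongly continuous Feller (hence contraction) semigroup on $C_0(\mathbb{R}^n)$ with generator $\mathcal{L}(x,\mathbf{D}_x)$, so that \eqref{semigrestimate} holds with $\omega=0$. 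Taking $E_t$ to be the inverse of $D(\mu;t)\in\mathcal{S}$ and independent of $Y$, Theorem~\ref{dodetheorem} (with the support condition relaxed to $supp\,\mu\subset[0,1)$ by Remark~b, which applies since $\omega=0$) shows that $u(t,x)=\mathbb{E}[\varphi(X_t)\mid X_0=x]$, with $X_t=Y_{E_t}$, solves the Cauchy problem and admits the representation $u(t,\cdot)=\int_0^\infty f_{E_t}(\tau)\,T_\tau^Y\varphi\,d\tau$ as in \eqref{Cauchy25}.

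The regularity claim $u(t,\cdot)\in C_0^2(\mathbb{R}^n)$ is where I expect the main difficulty to lie. Because $\varphi\in C_0^2(\mathbb{R}^n)\subset\mathrm{Dom}(\mathcal{L}(x,\mathbf{D}_x))$, I would show that $T_\tau^Y$ maps $C_0^2(\mathbb{R}^n)$ into itself with at most exponential growth of the $C_0^2$-operator norm in $\tau$, using the boundedness and Lipschitz regularity of the coefficients; combined with the super-exponential decay of $f_{E_t}$ supplied by Lemma~\ref{estimation}, this upgrades the Bochner integral defining $u(t,\cdot)$ from a $C_0$-valued to a $C_0^2$-valued integral, whence $u(t,\cdot)\in C_0^2(\mathbb{R}^n)$ for each fixed $t>0$. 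The delicate point is precisely the invariance of $C_0^2$ under the Feller semigroup $T_\tau^Y$, which is not automatic from the Feller property alone and must be extracted from the explicit structure of $\mathcal{L}(x,\mathbf{D}_x)$ in \eqref{levyPsdo}.

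For uniqueness within this class, suppose $u_1$ and $u_2$ both solve the Cauchy problem with $u_i(t,\cdot)\in C_0^2(\mathbb{R}^n)$ for each $t$, and set $w=u_1-u_2$, so that $w(0,\cdot)=0$ and $\mathcal{D}_\mu w=\mathcal{L}(x,\mathbf{D}_x)w$. Applying the Laplace transform in $t$ and using $w(0,\cdot)=0$ together with \eqref{laplace}, the left-hand side becomes $\eta(s)\,\widetilde{w}(s,\cdot)$ with $\eta(s)=\int_0^1 s^\beta\,d\mu(\beta)$, so that $(\eta(s)-\mathcal{L}(x,\mathbf{D}_x))\widetilde{w}(s,\cdot)=0$ for all sufficiently large $s$. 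Since $\mu$ is a positive measure, $\eta(s)>0=\omega$ for every $s>0$, so $\eta(s)$ lies in the resolvent set $\rho(\mathcal{L}(x,\mathbf{D}_x))$; hence $\eta(s)-\mathcal{L}(x,\mathbf{D}_x)$ is injective and $\widetilde{w}(s,\cdot)=0$. By the injectivity of the Laplace transform, $w\equiv 0$, giving $u_1=u_2$ and completing the argument.
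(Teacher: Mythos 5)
Your route matches the paper's: the published proof is a single sentence---``the result follows from representation \eqref{Cauchy25} in conjunction with estimate \eqref{estimate000}''---i.e.\ existence comes from the stochastic representation $u(t,\cdot)=\int_0^\infty f_{E_t}(\tau)\,T^Y_\tau\varphi\,d\tau$ furnished by Theorem~\ref{dodetheorem}, with convergence of the Bochner integral guaranteed by Lemma~\ref{estimation}. Your existence argument is exactly this, and your uniqueness argument (Laplace transform in $t$, then injectivity of $\eta(s)-\mathcal{L}(x,\mathbf{D}_x)$ for $\eta(s)>\omega=0$) is precisely the resolvent mechanism already at work in the proofs of Theorems~\ref{part2} and~\ref{continuum}; the paper simply leaves it implicit, so you have made explicit something the authors took for granted. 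Two caveats, both of which apply to the paper as much as to you. First, the uniqueness step needs $w(t,\cdot)=u_1(t,\cdot)-u_2(t,\cdot)$ to be Laplace transformable and needs $\widetilde{\mathcal{L}w}=\mathcal{L}\widetilde{w}$ (closedness of the generator plus an interchange of $\mathcal{L}$ with the $t$-integral), so strictly speaking uniqueness holds within a class of solutions of at most exponential growth; this should be stated. Second, you are right that the claim $u(t,\cdot)\in C_0^2(\mathbb{R}^n)$ hinges on $T^Y_\tau$ preserving $C_0^2$ with controlled growth of the $C_0^2$-norm, which does not follow from the Feller property alone and is nowhere established in the paper either; your proposal flags this honestly but does not close it, so on this point your write-up is a sketch with the same gap as the original. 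Everything else is sound.
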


\begin{proof}
   The result follows from representation \eqref{Cauchy25} in conjunction with estimate \eqref{estimate000}. \qed
\end{proof}

\noindent
\textit{Example 1. Time-changed $\alpha$-stable L\'evy process.}

Let $L_{\alpha,t}$ be a symmetric $n$-dimensional $\alpha$-stable
L\'evy process, which is a pure jump process. If
$p^L(t,x)=E[\varphi(L_{\alpha,t})|L_{\alpha,\tau}=x],$ where
$\varphi \in C_0^2(\mathbb{R}^n)$,
then $p^L(t,x)$ satisfies in the strong sense the Cauchy problem
(\cite{SDELevy})
\begin{align}
\frac{\partial p^L(t,x)}{\partial t}= - \kappa_{\alpha}
(-\Delta)^{{\alpha}/{2}} p^L(t,x), \ p^L(0,x)=\varphi(x), \ t>0, \, x \in \mathbb{R}^n, \label{levymotion1}
\end{align}
where $\kappa_{\alpha}$ is a constant depending on $\alpha$, and
$(-\Delta)^{\alpha/2}$ is a fractional power of the Laplace
operator. The operator on the right-hand side of \eqref{levymotion1}
can be represented as a pseudo-differential operator with the symbol
$\psi(\xi) := |\xi|^{\alpha}.$

Now suppose that $Y_t$ solves the SDE
 \begin{equation}\label{simple}
dY_t=g(Y_{t-})dL_{\alpha,t}, \ Y_0=x,
\end{equation}
where $g(x)$ is a Lipschitz-continuous function.  Notice that
\eqref{simple} takes the form given in \eqref{sde20} with the pure
jump process $L_{\alpha,t}$ as the driving process and
$F(x)=(0,0,g(x))$. In this case, the forward Kolmogorov equation
takes the form (\cite{SDELevy})
\begin{equation}\label{EX2-1}
\frac{\partial p^Y(t,x)}{\partial t}= - \kappa_{\alpha}
(-\Delta)^{{\alpha}/{2}} \{[g(x)]^{\alpha}p^Y(t,x)\}, ~ t>0, \ x \in
\mathbb{R}^n.
\end{equation}
Application of Theorem \ref{dodetheorem} implies that $X_t=Y_{E_t}$
satisfies the SDE
\begin{align}\label{EX2-3}
dX_t=g(X_{t-})dL_{\alpha,E_t}, \ X_0=x,
\end{align}
where $E_t$ is the first hitting time of the process $D(\mu; t)$
described in this theorem.  Moreover, if $E_t$ is independent of
$Y_t$, then the corresponding forward Kolmogorov equation becomes
\begin{equation}\label{EX2-2}
\mathcal{D}_{\mu} p^X(t,x) = - \kappa_{\alpha}
(-\Delta)^{{\alpha}/{2}} \{[g(x)]^{\alpha}p^X(t,x)\}, \ t>0, \ x \in
\mathbb{R}^n,
\end{equation}
where $\mathcal{D}_{\mu}$ is the operator defined in
\eqref{abstract11111}. When the SDE in  (\ref{EX2-3}) is driven by a
nonsymmetric $\alpha$-stable L\'evy process, an analogue of
(\ref{EX2-2}) holds using instead of (\ref{EX2-1}) its analogue
appearing in \cite{SDELevy}. \qed

\vspace{3mm}

\noindent
\textit{Example 2. Fractional analogue of the Feynman-Kac formula.}

Suppose that $Y_t$ is a strong solution of SDE (\ref{sdeLevy}). Let
$\bar{Y} \in \mathbb{R}^n$ be a fixed point and $q$ be a nonnegative
continuous function. Consider the process  $Y_t^q$ defined by $Y_t^q= Y_t$ {if} $0\le t
< \mathcal{T}_q,$ and $Y_t^q=\bar{Y}$  if $t \ge \mathcal{T}_q,$
where $\mathcal{T}_q$ is an $(\mathcal{F}_t)$-stopping time
satisfying
$\mathbb{P}(\mathcal{T}_q >
t|\mathcal{F}_t)=\exp\bigl(-\int_0^tq(Y_s)ds\bigr).$
Then
$Y^q_t$ is a Feller process with associated semigroup
(see \cite{Applebaum})
\begin{equation}\label{frac-F-K}
(T_t^q
\varphi)(y)=\mathbb{E}\left[\exp\left(-\int_0^tq(Y_s)ds\right)
\varphi(Y_t) \Big| Y_0=y\right],
\end{equation}
and infinitesimal generator $\mathcal{L}_q(x,\mathbf{D}_x)=-q(x)+
\mathcal{L}(x,\mathbf{D}_x),$ where $\mathcal{L}(x,\mathbf{D}_x)$ is
the pseudo-differential operator defined in (\ref{levyPsdo}). Let
$E_t$ be the inverse to a $\beta$-stable subordinator independent of
$Y_t$.  Then it follows from Theorem \ref{multi} with $N=1$ that the
transition probabilities of the process $X_t=Y_{E_t}$ solve the
Cauchy problem for the fractional order equation
\begin{align*}
\mathbf{D}_{\ast}^{\beta} u(t,x)=[-q(x)+\mathcal{L}(x,\mathbf{D}_x)]u(t,x), \ u(0,x)=\varphi(x), \ t > 0, \, x \in \mathbb{R}^n.
\end{align*}
Consequently, \eqref{frac-F-K}, with $X_t=Y_{E_t}$ replacing $Y_t$,
represents a fractional analogue of the Feynman-Kac formula. \qed

\section{Time-changed It\^o formula and its application} \label{sec ALTANATIVE}

This section illustrates a new method of derivation of
time-fractional differential equations based on the time-changed
It\^o formula
{in \cite{Kobayashi}
\textit{without using
the duality principle} (Theorem \ref{thsde000}). For simplicity, we
consider only the one-dimensional case with a Brownian motion as the
driving process.
Let $A^{*}$ be the operator defined as
\begin{equation}\label{conjugateBM}
A^{*}h(y)=-\dfrac\partial {\partial y}\{ b(y) h(y) \}
          + \dfrac 12 \dfrac{\partial^2} {\partial y^2} \{\sigma^2(y) h(y)\}.
\end{equation}

\begin{theorem}\label{Thm FP}\par
     Let $B_t$ be a one-dimensional standard $(\mathcal{F}_t)$-Brownian motion.  Let $D_t=$ $\sum_{k=1}^{N}c_k D_{k,t},$ where $c_k$ are positive constants and
     $D_{k,t}$ are stable
     subordinators of respective indices $\beta_k \in(0,1)$.
     Let $E_t$ be the inverse process to $D_t$.
     Suppose that $X_t$ is a process defined by the SDE
     \begin{align}
          dX_t= b(X_t)dE_t+\sigma(X_t)dB_{E_t}, \ X_0=x, \label{state FP1}
     \end{align}
     where $b(y)$ and $\sigma(y)$ satisfy the Lipschitz condition
     \eqref{lip}.  Suppose also that $X_{D_t}$ is independent of $E_t.$
     Then the transition probability
     $p^X(t,y|x)\equiv p^X(t,y)$
     satisfies in the weak sense the
     time-fractional differential equation
     \begin{align}
\sum_{k=1}^N c_k^{\beta_k} \mathbf{D}_\ast^{\beta_k} p^{X}(t,y) = A^{*} p^X(t,y), 
\label{state FP2}
     \end{align}
     with initial condition $p^X(0,y)=\delta_x(y),$
     the Dirac delta function with mass on $x$,
    where $A^\ast$ is the operator in \eqref{conjugateBM}.
\end{theorem}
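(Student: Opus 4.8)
The plan is to derive \eqref{state FP2} directly from the time-changed It\^o formula of \cite{Kobayashi}, deliberately bypassing the SDE-duality of Theorem \ref{thsde000}. First I would fix a test function $\varphi\in C_0^2(\mathbb{R})$, set $\mathcal{A}\varphi(y)=b(y)\varphi'(y)+\tfrac12\sigma^2(y)\varphi''(y)$, and note that $A^\ast$ from \eqref{conjugateBM} is its formal adjoint, $\int \mathcal{A}\varphi\,h=\int\varphi\,A^\ast h$. Applying the time-changed It\^o formula to $\varphi(X_t)$, where $X_t$ solves \eqref{state FP1}, gives
\[
\varphi(X_t)=\varphi(x)+\int_0^t \mathcal{A}\varphi(X_s)\,dE_s+\int_0^t \sigma(X_s)\varphi'(X_s)\,dB_{E_s}.
\]
Since $\varphi\in C_0^2$ while $b,\sigma$ are continuous, both $\mathcal{A}\varphi$ and $\sigma\varphi'$ are bounded with compact support, so the $dB_{E_s}$-term is a square-integrable $(\mathcal{F}_{E_t})$-martingale (its quadratic variation is dominated by a multiple of $E_t$, which is integrable by Lemma \ref{estimation}) and therefore has zero mean. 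Taking expectations and writing $u(t):=\mathbb{E}[\varphi(X_t)]=\int\varphi(y)\,p^X(t,y)\,dy$ then leaves $u(t)=\varphi(x)+\mathbb{E}\big[\int_0^t \mathcal{A}\varphi(X_s)\,dE_s\big]$.

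Next I would linearize the $dE_s$-integral by the pathwise change of variables $r=E_s$. Setting $Y_r:=X_{D_r}$ and using that $E$ is the continuous inverse of $D$ (so $D_{E_s}=s$ wherever $E$ increases, and $X$ is constant on the flat stretches of $E$, giving $X_t=Y_{E_t}$), one gets $\int_0^t \mathcal{A}\varphi(X_s)\,dE_s=\int_0^{E_t}\mathcal{A}\varphi(Y_r)\,dr$; this is deterministic and uses no duality. Conditioning on $E_t$ and invoking the hypothesis that $\{Y_r\}_{r\ge0}=\{X_{D_r}\}_{r\ge0}$ is independent of $E_t$, together with Fubini, then yields
\[
\mathbb{E}\Big[\int_0^t \mathcal{A}\varphi(X_s)\,dE_s\Big]=\int_0^\infty f_{E_t}(\tau)\,\Phi(\tau)\,d\tau,\qquad \Phi(\tau):=\int_0^\tau\psi(r)\,dr,
\]
with $\psi(r):=\mathbb{E}[\mathcal{A}\varphi(Y_r)]$. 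In the same way, $X_t=Y_{E_t}$ and independence give $h(t):=\mathbb{E}[\mathcal{A}\varphi(X_t)]=\int_0^\infty f_{E_t}(\tau)\,\psi(\tau)\,d\tau$, and by the adjoint relation $h(t)=\int\varphi(y)\,A^\ast p^X(t,y)\,dy$.

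The step that replaces duality is purely a Laplace-transform identity: the computation in the proof of Theorem \ref{part2} uses only the density formula \eqref{density10}, so for \emph{any} integrable $p$ one has $\widetilde{\big[\int_0^\infty f_{E_\cdot}(\tau)\,p(\tau)\,d\tau\big]}(s)=\eta(s)\,s^{-1}\,\tilde p(\eta(s))$ with $\eta(s)=\sum_{k=1}^N c_k^{\beta_k}s^{\beta_k}$. I would apply this to $p=\Phi$ (using $\tilde\Phi=\tilde\psi/\eta$) and to $p=\psi$, obtaining $\tilde u(s)=\varphi(x)/s+\tilde\psi(\eta(s))/s$ and $\tilde h(s)=\eta(s)\,s^{-1}\,\tilde\psi(\eta(s))$. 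Eliminating $\tilde\psi(\eta(s))$ gives $\eta(s)\,\tilde u(s)-\eta(s)\,s^{-1}\varphi(x)=\tilde h(s)$, which by \eqref{laplace} is precisely the Laplace transform of $\sum_{k=1}^N c_k^{\beta_k}\mathbf{D}_\ast^{\beta_k}u(t)=h(t)$ with $u(0)=\varphi(x)$; inverting and reading both sides as pairings against $\varphi$ produces \eqref{state FP2} weakly, with $p^X(0,\cdot)=\delta_x$ from $X_0=x$. I expect the main obstacle to be not the algebra but the rigorous justification of the interchanges---the zero mean of the stochastic integral, the conditioning-plus-Fubini step giving the two convolution representations, and the validity of the Laplace inversion---and in particular the verification that $\psi$ and $\Phi$, which are not semigroup orbits, still fall under the transform identity of Theorem \ref{part2}; this last point is exactly what makes the duality-free argument legitimate.
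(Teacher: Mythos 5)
Your proposal is correct and follows essentially the same route as the paper's own proof: the time-changed It\^o formula in place of the duality theorem, vanishing of the stochastic-integral term after taking expectations, conditioning on $E_t$ via the independence hypothesis to obtain two representations in terms of the parent process $Y_r=X_{D_r}$ (the paper's \eqref{prove FP-6} and \eqref{prove FP-5}, which you encode as the pairings $u,h$ and the functions $\psi,\Phi=J\psi$), and the same Laplace-transform elimination based on \eqref{laplace_v(t)} and \eqref{laplace}. The only cosmetic difference is that you work with scalar pairings against a fixed test function rather than with the distribution-valued densities $p^X(t,\cdot)$, $p^Y(u,\cdot)$ themselves; your observation that the transform identity of Theorem \ref{part2} depends only on \eqref{density10} and not on $p$ being a semigroup orbit is exactly the point the paper relies on.
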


\begin{proof}
     For simplicity, the proof is done for $N=2$.   
Let $Y_t=X_{D_t}$.  Then it follows that 
$X_t=X_{D_{E(t)}}=Y_{E_t}$, as in the proof of Corollary \ref{cor_EU}.
Hence, by the independence assumption
between $Y_t$ and $E_t$,
\begin{align} \label{prove FP-6}
p^X(t,y)=\int_0^{\infty} p^Y(u,y)f_{E_t}(u)du
\end{align}
in the sense of distributions.

Since we are not assuming the duality principle (Theorem
\ref{thsde000}), the fact that $p^Y$ satisfies the classical
Kolmogorov equation
$\frac{\partial}{\partial t} p^Y(t,y)=A^\ast
p^Y(t,y)$
cannot be used here.  Instead, we employ the
time-changed It\^o formula to obtain another representation of $p^X$
in terms of $p^Y$ as follows. Let $f\in C_c^\infty(\mathbb{R})$. Since X is constant on every interval $[D_{s-},D_s]$, it follows that $X_{D(s-)}=X_{D_s}=Y_s$
and
the time-changed It\^o formula in \cite{Kobayashi} yields
     \begin{align}
          f(X_t)-f(x)
          &=\int_0^{E_t} f'(Y_s) b(Y_s) ds+ \int_0^{E_t} f'(Y_s) \sigma(Y_s) dB_s  \label{prove FP-1}\\
          & \ \ \ \ \ + \dfrac 12 \int_0^{E_t} f''(Y_s) \sigma^2(Y_s) ds.  \notag
     \end{align}
Because $f\in C_c^\infty(\mathbb{R})$, the process $M$ defined by
$M_u:=\int_0^u f'(Y_s) \sigma(Y_s) dB_s$ is an
$(\mathcal{F}_t)$-martingale.    Taking expectations in \eqref{prove
FP-1} and conditioning on $E_t$ which has density $f_{E_t}$ given in
\eqref{density10}, we have
     \begin{align*}
          &\mathbb{E}[f(X_t)|X_0=x]-f(x)\\
          &=
\int_0^\infty \hspace{-1.8mm}\mathbb{E}\Bigl[M_{u}\hspace{-0.07mm}+\hspace{-0.07mm}\int_0^{u} \hspace{-1.3mm}\Big\{f'(Y_s) b(Y_s) + \dfrac 12 f''(Y_s) \sigma^2(Y_s)
          \Bigr\} ds\Big| E_t=u, Y_0=x\Bigr] f_{E_t}\hspace{-0.5mm}(u)du  \\
          &= \int_0^\infty \hspace{-0.5mm}\int_0^u \hspace{-0.5mm}\mathbb{E}\Bigl[f'(Y_s) b(Y_s)+\dfrac 12f''(Y_s) \sigma^2(Y_s)\Big|Y_0=x\Bigr] ds\; f_{E_t}\hspace{-0.5mm}(u)du
     \end{align*}
     by the assumption that $Y_t=X_{D_t}$ is independent of $E_t$.
The Fubini theorem is allowed since $f\in C_c^\infty(\mathbb{R})$ and $b$ and $\sigma$ are continuous functions.
Using $p^Y$, the above can be rewritten as
     \begin{align}
          &\mathbb{E}[f(X_t)|X_0=x]-f(x)\label{prove FP-2}\\
          &= \int_0^\infty \int_0^u \int_{-\infty}^\infty \Bigl\{f'(y)b(y)+ \dfrac 12 f''(y) \sigma^2(y)\Big\}
p^Y(s,y)
dy\; ds\; f_{E_t}(u)du \notag\\
          &= \int_{-\infty}^\infty f(y) \Bigl\{ \int_0^\infty (J{A}^{*}
p^Y(u,y)
) f_{E_t}(u) du\Bigr\} dy, \notag
     \end{align}
where $J$ is the integral operator. On the other hand, reexpressing
the left-hand side of \eqref{prove FP-2} in terms of $p^X$ yields
     \begin{align}
          \mathbb{E}[f(X_t)|X_0=x]-f(x)=\int_{-\infty}^\infty f(y)
p^X(t,y)
 dy - f(x). \label{prove FP-4}
     \end{align}
     Since $f\in C_c^\infty(\mathbb{R})$ is arbitrary and $C_c^\infty(\mathbb{R})$ is dense in $L^2(\mathbb{R})$,
comparison of \eqref{prove FP-2} and \eqref{prove FP-4}
     leads to another representation of $p^X$ with respect to $p^Y$:
     \begin{align}
         p^X(t,y) -\delta_x(y)= \int_0^\infty (J{A}^{*}p^Y(u,y)) f_{E_t}(u)du \label{prove FP-5}
     \end{align}
     in the sense of distributions with
$p^X(0,y)=\delta_x(y)$.

Now, we use the two representations \eqref{prove FP-6} and
\eqref{prove FP-5} to derive equation \eqref{state FP2} with the
help of Laplace transforms. The Laplace transform of a function
$v(t)$ of the form in \eqref{def_v(t)}, with $f_{E_t}$ in
\eqref{density10}, is computed as in \eqref{laplace_v(t)}. Using
this fact and taking the Laplace transform of both sides in
\eqref{prove FP-6}, we obtain
\begin{align*}
\widetilde{p^X}(s,y)&=(C_1 s^{\beta_1-1}+ C_2
s^{\beta_2-1})\hspace{0.5mm}\widetilde{p^Y}(C_1 s^{\beta_1}+C_2
s^{\beta_2},y), \ s>0,
\end{align*}
where $C_k=c_k^{\beta_k} (k=1,2)$; whereas the Laplace transform of
\eqref{prove FP-5} is
\begin{align*}
 \widetilde{p^X}(s,y)-\frac 1 s \delta_x(y)
&=(C_1 s^{\beta_1-1}+ C_2
s^{\beta_2-1})\hspace{0.5mm}\widetilde{JA^*p^Y}(C_1
s^{\beta_1}+C_2 s^{\beta_2},y)\\
&=\frac {C_1 s^{\beta_1-1}+ C_2 s^{\beta_2-1}}{C_1 s^{\beta_1}+C_2
s^{\beta_2}} \hspace{1mm} \widetilde{A^*p^Y}(C_1 s^{\beta_1}+C_2
s^{\beta_2},y), \ s>0.
\end{align*}
Combining these two identities,
for $s>0$, we have
\begin{align*}
   &C_1 \bigl(s^{\beta_1}\widetilde{p^X}(s,y)-s^{\beta_1-1} \delta_x(y)\bigr)
   +C_2 \bigl(s^{\beta_2}\widetilde{p^X}(s,y)-s^{\beta_2-1} \delta_x(y)\bigr)\\
  &=(C_1 s^{\beta_1}+C_2 s^{\beta_2}) \Bigl( \widetilde{p^X}(s,y)-\frac 1 s \delta_x(y)\Bigr)\\
  &=(C_1 s^{\beta_1-1}+C_2 s^{\beta_2-1})\hspace{0.5mm}\widetilde{A^*p^Y}(C_1
s^{\beta_1}+C_2 s^{\beta_2},y)=\widetilde{A^*p^X}(s,y),
\end{align*}
which coincides with the identity obtained from applying the Laplace
transform to both sides of \eqref{state FP2}.  \qed
\end{proof}

\begin{remark}
If SDE
\eqref{state FP1} contains an additional term $\rho(\hspace{-0.5pt}X_t\hspace{-0.5pt}) dt$, then the method used in the proof of Theorem \ref{Thm FP} does not work since the relationship $Y_{E_t}=X_t$ does not always follow.
Example 5.4 in \cite{Kobayashi}  yields the following conjecture:
if an additional term $\rho(\hspace{-0.5pt}X_t\hspace{-0.5pt})dt$ is included in SDE \eqref{state
FP1}, where $ \rho(y)$ also satisfies the Lipschitz condition, then
it is expected that the partial differential equation corresponding
to \eqref{state FP2} may involve a fractional integral term.
\end{remark}

\begin{acknowledgements}
The authors are indebted to Rudolf Gorenflo, Meredith Burr, Jamison
Wolf, and Xinxin Jiang for references and helpful comments.
We also appreciate suggestions of an
anonymous referee that resulted in a succinct paper.
\end{acknowledgements}

\end{document}